\documentclass[11pt]{article}

\usepackage{mathrsfs}
\usepackage{amsfonts}
\usepackage{titlesec}
\usepackage{enumerate}
\usepackage[toc,page,title,titletoc,header]{appendix}
\usepackage{amsmath,amsthm,amssymb,anysize}
\usepackage[numbers,sort&compress]{natbib}
\usepackage{color}
\usepackage{graphicx}
\usepackage{threeparttable}

\theoremstyle{definition}
\newtheorem{thm}{Theorem}[section]
\newtheorem{lem}{Lemma}[section]
\newtheorem{rem}[lem]{Remark}

\newtheorem{defn}[lem]{Definition}
\newtheorem{cor}[lem]{Corollary}

\setlength{\parindent}{1em} \setlength{\baselineskip}{20pt}


\pagestyle{plain}
\marginsize{3cm}{3cm}{0.5cm}{2.2cm}
\numberwithin{equation}{section}
\numberwithin{equation}{subsection}

\newcommand{\C}{\mathbb{C}}
\newcommand{\F}{\mathbb{F}}
\newcommand{\N}{\mathbb{N}}
\newcommand{\Z}{\mathbb{Z}}

\title{Biderivations,  commuting mappings and (2-)local derivations of $\mathbb{N}$-graded Lie algebras of maximal class}

\author{\textsc{Yong Yang}\\
\small{School of Mathematics and Statistics}\\
\small{Northeast Normal University}\\
\small{130024 Changchun, China}\\
\small{E-mail: yangyong195888221@163.com}
\and
\textsc{Liming Tang}\\
\small{School of Mathematical Sciences}\\
\small{Harbin Normal University}\\
\small{150025 Harbin, China}\\
\small{E-mail: limingtang@hrbnu.edu.cn}
\and
\textsc{Liangyun Chen}\footnote{corresponding author}\\
\small{School of Mathematics and Statistics}\\
\small{Northeast Normal University}\\
\small{130024 Changchun, China}\\
\small{E-mail: chenly640@nenu.edu.cn}}

\date{}

\begin{document}
\maketitle

\begin{quotation}
\small\noindent \textbf{Abstract}:
In Fialowski's classification for algebras of maximal class, there are three Lie algebras of maximal class  with 1-dimensional homogeneous components: $\mathfrak{m}_0$, $L_1$ and $\mathfrak{m}_2$.
In this paper, we studied their biderivations by considering the embedded mapping to derivation algebras.
Then  we determined  commuting mappings on these algebras as an application of biderivations. Finally,
 local and 2-local derivations for these three algebras were  characterized as the given gradings.

\vspace{0.2cm} \noindent{\textbf{Keywords}}: $\mathbb{N}$-graded Lie algebras of maximal class; biderivations; commuting mappings;  (2-)local derivations

\vspace{0.2cm} \noindent{\textbf{Mathematics Subject Classification 2020}}: 17B40, 17B65, 17B70

\end{quotation}
\setcounter{section}{-1}
\section{Introduction}
In the past decades, infinite-dimensional Lie algebras play important roles in the study of Lie theory because of their applications in mathematical physics.
Among infinite-dimensional Lie algebras, $\mathbb{N}$-graded ones have attached much attention.
The theory of $\mathbb{N}$-graded Lie algebras are closely related to nilpotent Lie algebras.
For example, it is obvious that any finite-dimensional $\mathbb{N}$-graded Lie algebra is  nilpotent.
Infinite-dimensional ones are called residual nilpotent Lie algebras.
Shalev and Zelmanov \cite{cc} introduced the definition of coclass of a
finitely generated and residually nilpotent Lie algebra $\mathfrak{g}$, in analogy with the
case of (pro-)$p$-groups, as $cc(\mathfrak{g})=\sum_{i\geq1}(\mathrm{dim}(\mathfrak{g}^{i}/\mathfrak{g}^{i+1})-1)$
(possibly infinity), where $\mathfrak{g}^{i}$ is the lower central series of $\mathfrak{g}$.
Lie algebras of coclass 1 are called Lie algebras of maximal
class (also called narrow or thin Lie algebras).
Fialowski \cite{fc} classified all infinite-dimensional $\mathbb{N}$-graded two-generated Lie
algebras $\mathfrak{g}=\bigoplus_{i=1}^{\infty}\mathfrak{g}_i$
with 1-dimensional homogeneous components $\mathfrak{g}_i$. According to her  classification, we obtain  that up to isomorphism, there are only three $\mathbb{N}$-graded Lie algebras of maximal class with 1-dimensional homogeneous components: $\mathfrak{m}_0$, $L_1$ and $\mathfrak{m}_2$, where  $L_1$ is the positive part of the Witt algebra $\mathrm{Der}(\mathbb{F}[x])$. This result  was also rediscovered in \cite{cc}.  Furthermore, the cohomology of these algebras with coefficients in the trivial modules was studied in \cite{ft}. The adjoint cohomology and deformations were studied in \cite{m0,L1,m2,M1,M2}.

The theory of biderivations and commuting mappings was introduced in \cite{b} for associative algebras, developed for Lie algebras in \cite{c2016,ccz2019,dt2019,e2021,jt2020} and Lie superalgebras in \cite{ccc2021,tmc2020,ycc2021}. It happens quite often that all biderivations are inner, for example, see \cite{ex1,ex2,ex4,ccc2021,ccz2019,ycc2021,zcz2020}. In particular, it was proved that all biderivations of a finite-dimensional complex simple Lie algebra, without the restriction of being skew-symmetric, are inner biderivations in \cite{ex1}. However, compared with the finite-dimensional case, less work has done for infinite-dimensional Lie algebras. In this paper,
 biderivations and commuting mappings of $\mathfrak{m}_0$, $L_1$ and $\mathfrak{m}_2$ were studied.
Firstly, we determined  the biderivations by considering the embedded mappings from  these algebras to their derivation algebras. Moveover, we characterized commuting mappings by the biderivatons.

Another generalized derivations that we are also interested in  are local and 2-local derivations.
The concepts of local  and 2-local derivations were introduced in \cite{K}  and \cite{P}.
 In recent years, local and 2-local derivations have aroused the interest of  many authors, see
\cite{AKR,l1,M0}. In particular,  it is proved that a finite-dimensional nilpotent Lie algebra $L$ with $\mathrm{dim}\ L\geq2$ always admits a 2-local derivation which is not a derivation in \cite{AKR}. However,
it does not hold for infinite-dimensional $\mathbb{N}$-graded Lie algebras. In fact, every  2-local derivation of $L_1$ is a  derivation \cite{l1}. In this paper, we proved that every local and 2-local derivation is a derivation for $\mathfrak{m}_0$, $L_1$ and $\mathfrak{m}_2$.

\section{Preliminaries}
Throughout this paper, the ground field $\mathbb{F}$ is an algebraically closed field of characteristic
zero and all vector spaces, algebras are over $\mathbb{F}$. A Lie algebra over $\mathbb{F}$ is a skew-symmetric  algebra whose multiplication satisfies the Jacobi identity.
Lie algebras of coclass 1 are called \emph{Lie algebras of maximal
class}.
Up to isomorphism, there are only three
$\mathbb{N}$-graded Lie algebras of maximal class with 1-dimensional homogeneous
components (see \cite{fc}):

$\bullet$ $\mathfrak{m}_0$: the Lie algebra $\mathfrak{m}_0$ is an $\mathbb{N}$-graded Lie algebra $\mathfrak{m}_{0}=\bigoplus_{i=1}^{\infty} (\mathfrak{m}_0)_i$ with 1-dimensional graded components $(\mathfrak{m}_0)_i$ and generated by the  components of degree 1 and 2. For a basis $e_i$ of $(\mathfrak{m}_0)_i$, the non-trivial brackets are $[e_1,e_i]=e_{i+1}$ for all $i\geq 2$.

$\bullet$ $L_1$: the Lie algebra $L_{1}$ is an $\mathbb{N}$-graded Lie algebra $L_{1}=\bigoplus_{i=1}^{\infty}L_{1}^{(i)}$
with 1-dimensional graded components $L_{1}^{(i)}$  and generated by the  components of degree 1 and 2. For a basis $e_i$ of
$L_{1}^{(i)}$, the non-trivial brackets are $[e_i,e_j]=(j-i)e_{i+j}$ for all $i,j\geq1$.

$\bullet$ $\mathfrak{m}_2$: the Lie algebra $\mathfrak{m}_2$ is an $\mathbb{N}$-graded Lie algebra $\mathfrak{m}_2=\bigoplus_{i=1}^{\infty}(\mathfrak{m}_2)_i$ with 1-dimensional graded components
$(\mathfrak{m}_2)_i$ and generated by the  components of degree 1 and 2. For a basis $e_i$ of
$(\mathfrak{m}_2)_i$, the non-trivial brackets are $[e_1,e_i]=e_{i+1}$ for all $i\geq2$, $[e_2,e_j]=e_{j+2}$ for all $j\geq3$.

\begin{defn}
A linear mapping of a Lie algebra $\mathfrak{g}$ is called a \emph{derivation} if it satisfies
$$D([x,y])=[D(x),y]+[x,D(y)],$$
for all $x,y$ in $\mathfrak{g}$. Denote by $\mathrm{Der}(\mathfrak{g})$ the set of derivations of $\mathfrak{g}$.
\end{defn}
For  $x\in \mathfrak{g}$, the mapping $\mathrm{ad}\ x: y\mapsto [x,y]$ is a derivation of $\mathfrak{g}$. Call it \emph{inner derivation}. A standard fact is that $\mathrm{Der}(\mathfrak{g})$ and $\mathrm{ad}\ \mathfrak{g}$ are both Lie subalgebra of $\mathrm{Hom}(\mathfrak{g},\mathfrak{g})$ and $\mathrm{ad}\ \mathfrak{g}$ is an ideal of $\mathrm{Der}(\mathfrak{g})$.
Moreover, the first cohomology with coefficients in the adjoint module is defined by
$\mathrm{H}^{1}(\mathfrak{g},\mathfrak{g})=\mathrm{Der}(\mathfrak{g})/\mathrm{ad}\ \mathfrak{g}$ (see \cite{Fuks}).
A bilinear mapping $f$ of $\mathfrak{g}$ is called \emph{skew-symmetric}  if
$$f(x,y)=-f(y,x),$$
for all $x,y$ in $\mathfrak{g}$. For a bilinear mapping $f$ of $\mathfrak{g}$ and an element $x$ in $\mathfrak{g}$, we define $L_{f,x}$ and $R_{f,x}$ by two linear mappings of $\mathfrak{g}$ satisfying
$L_{f,x}(y)=f(x,y)$ and $R_{f,x}(y)=f(y,x)$ for $y\in\mathfrak{g}$.
\begin{defn}\label{b}
A skew-symmetric bilinear mapping $f$ of a Lie algebra $\mathfrak{g}$ is called a \emph{biderivation} of $\mathfrak{g}$ if
$L_{f,x}$ and $R_{f,x}$ both are derivations of $\mathfrak{g}$ for any $x$ in $\mathfrak{g}$.
Denote by $\mathrm{BDer}(\mathfrak{g})$ the set of  biderivations of $\mathfrak{g}$.
\end{defn}
Suppose that the mapping $f_{\lambda}: \mathfrak{g}\times \mathfrak{g}\longrightarrow \mathfrak{g}$ is defined by $f_{\lambda}(x,y)=\lambda [x,y]$
for all $x,y\in\mathfrak{g}$, where $\lambda\in\mathbb{F}$. Then it is easy to check that $f_{\lambda}$ is a biderivation of $\mathfrak{g}$. This class of biderivations is called \emph{inner biderivation}.
Denote by $\mathrm{IBDer}(\mathfrak{g})$ the set of inner biderivations of $\mathfrak{g}$.

Suppose that $\mathfrak{g}=\bigoplus_{i=1}^{\infty} \mathfrak{g}_i$ is a $\mathbb{N}$-graded Lie algebra.
For $x\in\mathfrak{g}$, write $\|x\|$ for the degree of $x$.
 If $\mathfrak{g}=\bigoplus_{i=1}^{\infty} \mathfrak{g}_i$ has a homogenous basis $\{e_i\mid i\in\mathbb{N}\}$, we define $e_{i}^{j}\in\mathrm{Hom}(\mathfrak{g},\mathfrak{g})_{j-i}$ and $e^{i,j}_{k}\in\mathrm{Hom}(\mathfrak{g}\wedge\mathfrak{g},\mathfrak{g})_{k-i-j}$, $i<j$ by
$e_{i}^{j}:e_i\rightarrow e_j,$ and $e^{i,j}_{k}:(e_i,e_j)\rightarrow e_k$.
 It is easy to see that $\mathrm{Der}(\mathfrak{g})$ and $\mathrm{BDer}(\mathfrak{g})$  are $\mathbb{N}$-graded subspaces of $\mathrm{Hom}(\mathfrak{g},\mathfrak{g})$ and $\mathrm{Hom}(\mathfrak{g}\wedge\mathfrak{g},\mathfrak{g})$ respectively, where the
homogeneous components of weight $k$ are given by
$$\mathrm{Der}_{k}(\mathfrak{g})=\{\phi\in\mathrm{Der(\mathfrak{g})}\mid\phi(\mathfrak{g}_i)\subseteq \mathfrak{g}_{k+i},i\in\mathbb{N}\}$$
and
$$\mathrm{BDer}_{k}(\mathfrak{g})=\{f\in\mathrm{BDer(\mathfrak{g})}\mid f(\mathfrak{g}_i,\mathfrak{g}_j)\subseteq \mathfrak{g}_{k+i+j},i,j\in\mathbb{N}\}.$$
\begin{defn}
For a Lie algebra $\mathfrak{g}$, we denote the \emph{center} of
$\mathfrak{g}$ by
$$\mathrm{C}(\mathfrak{g})=\{x\in\mathfrak{g}\mid [x,\mathfrak{g}]=0\}.$$
The  Lie algebra $\mathfrak{g}$ is called \emph{centerless} if $\mathrm{C}(\mathfrak{g})=0$.
\end{defn}
\begin{rem}
It is easy to see that $\mathfrak{m}_0$, $L_1$ and $\mathfrak{m}_2$ are all centerless.
\end{rem}

\begin{lem}\label{C}
Suppose that $\mathfrak{g}$ is a centerless $\mathbb{N}$-graded Lie algebra. Then
$\mathfrak{g}$  can be embedded into
 a   $\mathbb{Z}$-graded Lie algebra $\mathfrak{\widetilde{g}}$ as an ideal such that
 $$\mathrm{Der}(\mathfrak{g})\cong \mathrm{ad}_\mathfrak{g}\ \mathfrak{\widetilde{g}}, \quad
 \mathrm{Ann}_{\mathfrak{\widetilde{g}}}\ \mathfrak{g}=
 \{x\in\mathfrak{\widetilde{g}}\mid [x,\mathfrak{g}]=0\}=0.$$
 Moreover, for any $f\in \mathrm{BDer}_{k}(\mathfrak{g})$, $k\in\Z$, there exists a unique linear mapping $\varphi_f:\ \mathfrak{g}\rightarrow \mathfrak{\widetilde{g}}$ such that, for $x,y\in\mathfrak{g}$,
$$f(x,y)=[\varphi_f(x),y]=-[\varphi_f(y),x],$$
where $\|f\|=\|\varphi_f\|$.
\end{lem}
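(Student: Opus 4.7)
The plan is to take $\widetilde{\mathfrak{g}} := \bigoplus_{k \in \Z} \mathrm{Der}_k(\mathfrak{g})$, the $\mathbb{Z}$-graded Lie algebra of homogeneous derivations sitting inside $\mathrm{End}(\mathfrak{g})$, and to embed $\mathfrak{g}$ inside via the adjoint map $\mathrm{ad}: \mathfrak{g} \to \widetilde{\mathfrak{g}}$, $x \mapsto \mathrm{ad}\ x$. Since $\mathfrak{g}$ is centerless, $\mathrm{ad}$ is injective, and since $\mathrm{ad}\ e_i \in \mathrm{Der}_i(\mathfrak{g})$ (it raises degree by $i$), the embedding respects the $\mathbb{N}$-grading and lands in the positive part of $\widetilde{\mathfrak{g}}$. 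The standard identity $[D, \mathrm{ad}\ x] = \mathrm{ad}\ D(x)$ for $D \in \mathrm{Der}(\mathfrak{g})$ then shows that $\mathrm{ad}\ \mathfrak{g}$ is an ideal of $\widetilde{\mathfrak{g}}$. Faithfulness of the natural action $\widetilde{\mathfrak{g}} \hookrightarrow \mathrm{End}(\mathfrak{g})$ immediately yields $\mathrm{Ann}_{\widetilde{\mathfrak{g}}}\,\mathfrak{g} = 0$. To confirm $\mathrm{Der}(\mathfrak{g}) \cong \mathrm{ad}_\mathfrak{g}\,\widetilde{\mathfrak{g}}$, I would observe that because $\mathfrak{g} = \bigoplus_i \mathfrak{g}_i$ has $1$-dimensional graded components indexed by $\mathbb{N}$, every $D \in \mathrm{Der}(\mathfrak{g})$ decomposes pointwise as $D = \sum_k D_k$ with $D_k \in \mathrm{Der}_k(\mathfrak{g})$, producing an element of $\widetilde{\mathfrak{g}}$ whose adjoint action recovers $D$.

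For the second assertion, given $f \in \mathrm{BDer}_k(\mathfrak{g})$, I would define $\varphi_f(x) := L_{f,x}$. By Definition \ref{b}, $L_{f,x}$ is a derivation of $\mathfrak{g}$, and for homogeneous $x$ of weight $\|x\|$ one has $L_{f,x}(\mathfrak{g}_j) = f(x,\mathfrak{g}_j) \subseteq \mathfrak{g}_{k+\|x\|+j}$, so $L_{f,x} \in \mathrm{Der}_{k+\|x\|}(\mathfrak{g}) \subseteq \widetilde{\mathfrak{g}}$ and $\varphi_f$ is homogeneous of weight $\|\varphi_f\| = k = \|f\|$. The identity $f(x,y) = [\varphi_f(x), y]$ is tautological from $\varphi_f(x) = L_{f,x}$, while skew-symmetry of $f$ gives $-[\varphi_f(y), x] = -L_{f,y}(x) = -f(y,x) = f(x,y)$. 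Uniqueness is immediate from $\mathrm{Ann}_{\widetilde{\mathfrak{g}}}\,\mathfrak{g} = 0$: any competing $\varphi'$ would satisfy $[\varphi'(x) - \varphi_f(x), y] = 0$ for every $y \in \mathfrak{g}$, forcing $\varphi'(x) = \varphi_f(x)$.

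The step I expect to require the most care is the $\mathbb{Z}$-grading on $\widetilde{\mathfrak{g}}$ paired with the identification $\widetilde{\mathfrak{g}} \cong \mathrm{Der}(\mathfrak{g})$. In general one must verify that an arbitrary derivation truly belongs to the algebraic direct sum $\bigoplus_k \mathrm{Der}_k(\mathfrak{g})$ rather than a mere direct product, and this is where the hypothesis that $\mathfrak{g}$ has $1$-dimensional homogeneous components indexed by $\mathbb{N}$ is essential (in particular, it ensures the weight-decomposition of a derivation is well controlled on each generator). Once $\widetilde{\mathfrak{g}}$ is set up correctly, the remaining assertions are direct consequences of the definitions of derivation, biderivation and annihilator.
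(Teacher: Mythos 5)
Your proposal is correct and follows essentially the same route as the paper: take $\widetilde{\mathfrak{g}}$ to be the derivation algebra, use centerlessness to embed $\mathfrak{g}$ as the ideal $\mathrm{ad}\,\mathfrak{g}$ and to get $\mathrm{Ann}_{\widetilde{\mathfrak{g}}}\,\mathfrak{g}=0$, and then send $x$ to the (unique) element of $\widetilde{\mathfrak{g}}$ acting as $L_{f,x}$. The only difference is that you work with $\bigoplus_k\mathrm{Der}_k(\mathfrak{g})$ and explicitly flag the direct-sum-versus-product issue for the $\mathbb{Z}$-grading, a point the paper passes over silently (and which is harmless here since the algebras in question are generated by $\mathfrak{g}_1\oplus\mathfrak{g}_2$, so every derivation has only finitely many nonzero homogeneous components).
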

\begin{proof}
Since $\mathfrak{g}$ is centerless, we have $\mathfrak{g}\cong \mathrm{ad}\ \mathfrak{g}\triangleleft\mathrm{Der}(\mathfrak{g})$.
Let  $D\in \mathrm{Der}(\mathfrak{g})$. If $[D,\mathrm{ad}\ x]=\mathrm{ad}\ D(x)=0$ for all $x\in \mathfrak{g}$, then $D(x)\in \mathrm{C}(\mathfrak{g})=0$. So $D=0$.
By identifying
$\mathfrak{g}$ with $\mathrm{ad}\ \mathfrak{g}$, we get the isomorphism $\mathrm{Der}(\mathfrak{g})\cong \mathrm{ad}_\mathfrak{g}(\mathrm{Der}(\mathfrak{g}))$.
Set $\mathfrak{\widetilde{g}}=\mathrm{Der}(\mathfrak{g})$. We get $\mathrm{Der}(\mathfrak{g})\cong \mathrm{ad}_\mathfrak{g}\ \mathfrak{\widetilde{g}}$ and $\mathrm{Ann}_{\mathfrak{\widetilde{g}}}\ \mathfrak{g}=0$.
For $f\in \mathrm{BDer}(\mathfrak{g})$ and $x\in\mathfrak{g}$, the mapping $L_{f,x}\in\mathrm{Der}(\mathfrak{g})$ by Definition \ref{b}. Then there exists $y_x\in
\mathfrak{\widetilde{g}}$ such that $L_{f,x}=\mathrm{ad}_\mathfrak{g}\ y_x$. This $y_x$ is unique since $\mathrm{Ann}_{\mathfrak{\widetilde{g}}}\ \mathfrak{g}=0$. So we get a unique  linear mapping $\varphi_f:\
\mathfrak{g}\rightarrow\mathfrak{\widetilde{g}}$ denoted by $\varphi_f(x)=y_x$. Then,
 for any $x,y\in\mathfrak{g}$, we have
$f(x,y)=-f(y,x)=L_{f,x}(y)=[\varphi_{f}(x),y]=-[\varphi_{f}(y),x].$
\end{proof}

\begin{defn}
A linear mapping $\phi$ of a Lie algebra $\mathfrak{g}$ is called \emph{commuting}  if
$[\phi(x),x]=0$ for any $x$ in $\mathfrak{g}$.
\end{defn}
An important application of commuting mappings is to construct biderivations (for example, see \cite{ex1,ex2,ex4}), as shown in the following lemma.
\begin{lem}\label{linear}\cite{ex4}
Suppose that $\mathfrak{g}$ is a Lie algebra and $\phi$ is a commuting mapping of $\mathfrak{g}$. Then the bilinear form $\phi_f$, satisfying $\phi_{f}(x,y)=[x,\phi(y)]$ for $x,y\in \mathfrak{g}$, is a  biderivation of $\mathfrak{g}$.
\end{lem}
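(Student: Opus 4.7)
The plan is to establish two things: first the skew-symmetry of $\phi_f$, and second that each slice $L_{\phi_f,x}$ and $R_{\phi_f,x}$ is a derivation. The key auxiliary fact I would extract from the commuting hypothesis is the polarized identity
\begin{equation*}
[\phi(x),y] = [x,\phi(y)], \quad \text{for all } x,y\in\mathfrak{g}.
\end{equation*}
To obtain this, I would apply the commuting condition to the element $x+y$, expand $[\phi(x+y),x+y]=0$ by bilinearity, and cancel the two diagonal terms $[\phi(x),x]$ and $[\phi(y),y]$, which both vanish by assumption. This leaves $[\phi(x),y] + [\phi(y),x] = 0$, equivalently $[\phi(x),y] = [x,\phi(y)]$.

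With this identity in hand, skew-symmetry is immediate: $\phi_f(x,y) = [x,\phi(y)] = [\phi(x),y] = -[y,\phi(x)] = -\phi_f(y,x)$. The same computation also rewrites the definition of $\phi_f$ in a more useful form, namely $\phi_f(x,y) = [\phi(x),y]$, which exhibits the left slice as
\begin{equation*}
L_{\phi_f,x}(y) = \phi_f(x,y) = [\phi(x),y] = (\mathrm{ad}\,\phi(x))(y).
\end{equation*}
Thus $L_{\phi_f,x} = \mathrm{ad}\,\phi(x)$ is an inner derivation of $\mathfrak{g}$, hence a derivation. For the right slice, the skew-symmetry already proved gives $R_{\phi_f,x} = -L_{\phi_f,x} = -\mathrm{ad}\,\phi(x)$, which is likewise a derivation. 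By Definition \ref{b}, $\phi_f$ is therefore a biderivation.

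The only nontrivial point is the polarization step that produces the identity $[\phi(x),y]=[x,\phi(y)]$; once this identity is isolated, the rest of the argument is a direct reading-off from the definitions and requires no further calculation. I do not anticipate any real obstacle, as all three conditions of Definition \ref{b} reduce to the single observation that $L_{\phi_f,x}$ is literally an inner derivation attached to $\phi(x)$.
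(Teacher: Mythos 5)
Your proof is correct. The paper itself states this lemma with only a citation to Bre\v{s}ar--Zhao and supplies no proof, so there is nothing to compare against; your argument --- polarizing $[\phi(x+y),x+y]=0$ to get $[\phi(x),y]=[x,\phi(y)]$, then reading off $L_{\phi_f,x}=\mathrm{ad}\,\phi(x)$ and $R_{\phi_f,x}=-\mathrm{ad}\,\phi(x)$ --- is the standard one and is complete.
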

We here  recall and introduce theories of local and 2-local derivations of Lie algebras.

\begin{defn}
A  linear mapping $\Delta$ of a Lie algebra $\mathfrak{g}$ is called a \emph{local derivation} of $\mathfrak{g}$ if
for
every $x\in\mathfrak{g}$, there exists a derivation $D_{x}$ of $\mathfrak{g}$ (depending on $x$) such that
$\Delta(x)=D_{x}(x)$.
Denote by $\mathrm{LDer}(\mathfrak{g})$ the set of 2-local derivations of $\mathfrak{g}$.
\end{defn}

\begin{defn}\label{bl}
A  linear mapping $\Delta$ of a Lie algebra $\mathfrak{g}$ is called a \emph{ 2-local derivation} of $\mathfrak{g}$ if
for
every $x,y\in\mathfrak{g}$, there exists a derivation $D_{x,y}$ of $\mathfrak{g}$ (depending on $x,y$) such that
$\Delta(x)=D_{x,y}(x)$ and $\Delta(y)=D_{x,y}(y)$.
Denote by $\mathrm{BLDer}(\mathfrak{g})$ the set of 2-local derivations of $\mathfrak{g}$.
\end{defn}
\begin{rem}
Obviously, a derivation is a 2-local derivation.
By taking $x=y$ in Definition \ref{bl}, we get that a  2-local derivation is a local derivation automatically.
Therefore,
for a Lie algebra $\mathfrak{g}$, we have
$$\mathrm{LDer}(\mathfrak{g})\supseteq\mathrm{BLDer}(\mathfrak{g})\supseteq\mathrm{Der}(\mathfrak{g}).$$
\end{rem}

\begin{lem}\label{ld}
Suppose that $\mathfrak{g}=\bigoplus_{i=1}^{\infty} \mathfrak{g}_i$ is an $\mathbb{N}$-graded Lie algebra. For $k\in\mathbb{Z}$, Set
\begin{eqnarray*}
  \mathrm{LDer}_k(\mathfrak{g})&=& \{\Delta\in\mathrm{Hom}(\mathfrak{g},\mathfrak{g})|\ \mathrm{for\ any}\ x\in\mathfrak{g}, \mathrm{there\ exists}\ D_{x;k}\in\mathrm{Der}_{k}(\mathfrak{g}), \mathrm{such\ that}\ \\
   && \Delta(x)=D_{x;k}(x)\}.
\end{eqnarray*}
Then $\mathrm{LDer}(\mathfrak{g})=\sum_{k\in\mathbb{Z}}\mathrm{LDer}_k(\mathfrak{g})$.
\end{lem}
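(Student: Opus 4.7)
The plan is to establish both inclusions of $\mathrm{LDer}(\mathfrak{g}) = \sum_{k \in \mathbb{Z}} \mathrm{LDer}_k(\mathfrak{g})$ separately. The $\supseteq$ direction is routine: given a (pointwise-finite) decomposition $\Delta = \sum_k \Delta_k$ with each $\Delta_k \in \mathrm{LDer}_k(\mathfrak{g})$, for any $x \in \mathfrak{g}$ only finitely many $\Delta_k(x)$ are nonzero; choosing $D_{x;k} \in \mathrm{Der}_k(\mathfrak{g})$ with $D_{x;k}(x) = \Delta_k(x)$ for each contributing $k$ and setting $D_x := \sum_k D_{x;k}$ produces a derivation in $\mathrm{Der}(\mathfrak{g})$ that reproduces $\Delta(x)$, so $\Delta \in \mathrm{LDer}(\mathfrak{g})$.

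For the harder $\subseteq$ inclusion, I would first decompose an arbitrary $\Delta \in \mathrm{LDer}(\mathfrak{g})$ pointwise into weight components: for a homogeneous $x_i \in \mathfrak{g}_i$, set $\Delta_k(x_i) := \pi_{i+k}(\Delta(x_i))$, where $\pi_j : \mathfrak{g} \to \mathfrak{g}_j$ is the grading projection, and extend by linearity. This yields, for each $k \in \mathbb{Z}$, a weight-$k$ linear map $\Delta_k$ with $\Delta = \sum_k \Delta_k$ holding pointwise (only finitely many summands contribute at any given argument). The core task is then to verify that each such $\Delta_k$ in fact belongs to $\mathrm{LDer}_k(\mathfrak{g})$.

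To this end I would invoke the grading automorphism $\sigma_t \in \mathrm{Aut}(\mathfrak{g})$ defined on the homogeneous basis by $\sigma_t(e_i) = t^i e_i$ for $t \in \mathbb{F}^*$. A direct computation yields $\sigma_t \Delta \sigma_t^{-1} = \sum_k t^k \Delta_k$ pointwise, and since sums of local derivations remain local derivations (by adding the witness derivations) and the local-derivation property is preserved under conjugation by Lie automorphisms, each $\sigma_t \Delta \sigma_t^{-1}$ is again a local derivation. Fixing $x \in \mathfrak{g}$ with $K(x) := \{k : \Delta_k(x) \neq 0\}$ finite, I would pick $|K(x)|$ distinct scalars $t_1, \ldots, t_n \in \mathbb{F}^*$ and apply Vandermonde inversion to express each $\Delta_k(x)$ as an $\mathbb{F}$-linear combination of values $D^{(x,t_j)}(x)$ furnished by the local-derivation property applied to $\sigma_{t_j} \Delta \sigma_{t_j}^{-1}$, which combine into a derivation $E_{x,k} \in \mathrm{Der}(\mathfrak{g})$ with $E_{x,k}(x) = \Delta_k(x)$. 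The main obstacle I anticipate is refining $E_{x,k}$ to a pure weight-$k$ derivation: since $\Delta_k(x)$ lies in $\bigoplus_i \mathfrak{g}_{i+k}$ while $\mathrm{Der}(\mathfrak{g}) = \bigoplus_m \mathrm{Der}_m(\mathfrak{g})$ is graded, a careful degree comparison of the $\mathfrak{g}_j$-components of $E_{x,k}(x) = \Delta_k(x)$, combined with a second Vandermonde extraction via $\sigma_s$-conjugation of $E_{x,k}$, should isolate the weight-$k$ summand $D_{x;k} \in \mathrm{Der}_k(\mathfrak{g})$ achieving $D_{x;k}(x) = \Delta_k(x)$; this degree bookkeeping is the most delicate part of the argument.
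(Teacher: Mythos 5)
Your $\supseteq$ direction is fine, your projection-based definition of the weight components $\Delta_k$ is sound, and the automorphism/Vandermonde argument does correctly show that each $\Delta_k(x)$ is the value at $x$ of \emph{some} derivation $E_{x,k}\in\mathrm{Der}(\mathfrak{g})$ (modulo a small repair: when the exponent set $K(x)$ is not $\{0,1,\dots,n-1\}$ the matrix $(t_j^{k})$ is a generalized Vandermonde matrix, which can be singular for merely distinct $t_j$, e.g.\ exponents $\{0,2\}$ with $t_2=-t_1$; you need a generic choice, which exists since the characters $t\mapsto t^k$ of $\mathbb{F}^{*}$ are linearly independent). The genuine gap is the final step, which is precisely the content of the lemma: membership in $\mathrm{LDer}_k(\mathfrak{g})$ demands a witness lying in $\mathrm{Der}_k(\mathfrak{g})$, whereas you only have $E_{x,k}=\sum_m E_{x,k}^{(m)}$ with $\sum_m E_{x,k}^{(m)}(x)=\Delta_k(x)$. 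The proposed ``second Vandermonde extraction via $\sigma_s$-conjugation of $E_{x,k}$'' does not close this: $(\sigma_sE_{x,k}\sigma_s^{-1})(x)=\sum_m s^mE_{x,k}^{(m)}(x)$ is not related to $\Delta$ for $s\neq1$, so there is nothing to compare it against; extracting the coefficient of $s^k$ produces the weight-$k$ value $E_{x,k}^{(k)}(x)$, and there is no reason for this to equal $\Delta_k(x)$, because for inhomogeneous $x=\sum_i x_i$ the cross terms $E_{x,k}^{(m)}(x_i)\in\mathfrak{g}_{i+m}$ with $m\neq k$ genuinely contribute to $\Delta_k(x)$. The ``degree bookkeeping'' you defer is exactly where the whole difficulty of the lemma lives; as written, your argument only proves that each $\Delta_k$ is a local derivation, not that it lies in $\mathrm{LDer}_k(\mathfrak{g})$.

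For comparison, the paper's proof takes a much shorter and different route that makes the weight-$k$ witness automatic: for each $x$ it decomposes the witness $D_x$ of $\Delta$ at $x$ into graded components $D_x=\sum_k D_{x;k}$ and \emph{defines} $\Delta_k(x):=D_{x;k}(x)$, so that $D_{x;k}\in\mathrm{Der}_k(\mathfrak{g})$ witnesses $\Delta_k$ at $x$ by construction. The price paid there is that linearity (and independence of the choice of $D_x$) of this $\Delta_k$ is asserted rather than verified; if it is linear, it agrees with your projection-defined $\Delta_k$ on homogeneous elements and hence everywhere, so the omitted verification is essentially the same disentanglement across graded components that your proposal could not complete. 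You have therefore correctly isolated the real obstacle, but neither your Vandermonde refinement nor the sketched degree comparison resolves it.
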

\begin{proof}
For a local derivation $\Delta\in \mathfrak{g}$, it is sufficient to prove that $\Delta\in\sum_{k\in\mathbb{Z}}\mathrm{LDer}_k(\mathfrak{g})$. For any $x\in\mathfrak{g}$, by definition, there exists a derivation $D_x\in\mathrm{Der}(\mathfrak{g})$ and $D_{x;k}\in\mathrm{Der}_k(\mathfrak{g})$ such that
$$\Delta(x)=D_x(x)=\sum_{k\in\mathbb{Z}}D_{x;k}(x).$$
Define a mapping $\Delta_k$ by $\Delta_k(x)=D_{x;k}(x)$ for any $x\in\mathfrak{g}$. Then $\Delta_k\in
\mathrm{LDer}_k(\mathfrak{g})$ and
$\Delta(x)=\sum_{k\in\mathbb{Z}}\Delta_k(x)$. Thus, $\Delta=\sum_{k\in\mathbb{Z}}\Delta_k\in\sum_{k\in\mathbb{Z}}\mathrm{LDer}_k(\mathfrak{g})$.
\end{proof}

Next, we characterize all  biderivations, linear commuting mappings and (2-)local derivations of
$\mathfrak{m}_{0}$, $L_1$ and $\mathfrak{m}_{2}$ one by one.

\section{$\mathbb{N}$-graded Lie algebra  $\mathfrak{m}_{0}$}
From the result of the first cohomology of $\mathfrak{m}_0$  with coefficients in the adjoint module \cite[Theorem 2]{m0}, we get the derivations of $\mathfrak{m}_0$ by considering the inner derivations in the following lemma.
\begin{lem}\label{m0}
$\mathrm{dim}\ \mathrm{Der}_{k}(\mathfrak{m}_0)=\left\{
                    \begin{array}{ll}
                      2, & \hbox{$k\geq0$;} \\
                      0, & \hbox{$k\leq-1$.}
                    \end{array}
                  \right.
$
In particular,

(1) in $\mathrm{Der}_{k}(\mathfrak{m}_0)$ for $k\geq1$, a basis is $e^{1+k}_1$, $\sum_{i\geq2}e^{i+k}_i$;

(2) in $\mathrm{Der}_{0}(\mathfrak{m}_0)$, a basis is $\sum_{i\geq2}e^{i}_i$, $e^{1}_{1}+\sum_{i\geq3}(i-2)e^{i}_i$.
\end{lem}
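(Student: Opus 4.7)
The plan is to combine the cited computation of $\mathrm{H}^{1}(\mathfrak{m}_0,\mathfrak{m}_0)$ from \cite{m0} with the explicit graded description of inner derivations. Since $\mathfrak{m}_0$ is centerless, the adjoint embedding yields the graded short exact sequence
\[ 0 \longrightarrow \mathrm{ad}\,\mathfrak{m}_0 \longrightarrow \mathrm{Der}(\mathfrak{m}_0) \longrightarrow \mathrm{H}^{1}(\mathfrak{m}_0,\mathfrak{m}_0) \longrightarrow 0. \]
Because $\mathrm{ad}\,e_i$ has weight $i$, we have $\dim(\mathrm{ad}\,\mathfrak{m}_0)_k = 1$ for $k \geq 1$ and zero otherwise. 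Feeding the graded Betti numbers of $\mathrm{H}^{1}(\mathfrak{m}_0,\mathfrak{m}_0)$ supplied by the cited theorem into this exact sequence immediately yields the stated dimension count: $\dim\mathrm{Der}_k(\mathfrak{m}_0) = 2$ for $k \geq 0$ and $0$ for $k \leq -1$.

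The remaining task is to exhibit the displayed bases. For each $k \geq 0$ I would write down the two explicit candidates and check that each lies in $\mathrm{Der}_k(\mathfrak{m}_0)$; linear independence would follow at once from their action on the generator $e_1$. Every verification reduces to the Leibniz rule on the only nontrivial bracket family $[e_1,e_i] = e_{i+1}$ for $i \geq 2$. For $k \geq 1$, the map $e_1^{1+k}$ is a derivation because $[e_{1+k},e_i] = 0$ whenever $i \geq 2$, while $\sum_{i \geq 2} e_i^{i+k}$ satisfies $D(e_{i+1}) = [e_1,D(e_i)] = e_{i+1+k}$ directly. For $k = 0$, writing a candidate as $D(e_1) = \lambda_1 e_1$ and $D(e_i) = \lambda_i e_i$ for $i \geq 2$, the Leibniz rule gives the recursion $\lambda_{i+1} = \lambda_1 + \lambda_i$ for $i \geq 2$; the two choices $(\lambda_1,\lambda_2) = (0,1)$ and $(1,0)$ yield precisely the two basis vectors in the statement.

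The main difficulty I anticipate is bookkeeping rather than genuine computation: pairing the cohomology representatives from \cite{m0} with the chosen explicit basis elements so that the split between inner and outer contributions is transparent. Specifically, for $k = 1$ the inner derivation $\mathrm{ad}\,e_1$ equals $\sum_{i \geq 2} e_i^{i+1}$ and the outer generator is $e_1^{2}$; for $k \geq 2$ the inner derivation $\mathrm{ad}\,e_k$ is proportional to $e_1^{k+1}$ and the outer generator is $\sum_{i \geq 2} e_i^{i+k}$; for $k = 0$ both basis vectors are outer, since there are no inner derivations of weight zero. Keeping this inner/outer pairing straight across all weights is the only fiddly aspect of the argument, and it is what justifies the chosen presentation of each basis.
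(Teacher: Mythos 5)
Your proposal is correct and follows essentially the same route the paper intends: the paper states this lemma without proof, deriving it from the graded dimensions of $\mathrm{H}^{1}(\mathfrak{m}_0,\mathfrak{m}_0)$ in the cited work together with the inner derivations, which is precisely your exact-sequence argument plus the direct Leibniz-rule verification of the displayed basis elements. Your bookkeeping of the inner/outer split (e.g.\ $\mathrm{ad}\,e_1=\sum_{i\geq2}e_i^{i+1}$ in weight $1$, $\mathrm{ad}\,e_k$ proportional to $e_1^{k+1}$ in weight $k\geq2$, and no inner derivations in weight $0$) and the weight-zero recursion $\lambda_{i+1}=\lambda_1+\lambda_i$ are all accurate.
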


In order to describe the derivation algebra of $\mathfrak{m}_0$, we denote by $$\mathfrak{\widetilde{m}}_0=\mathfrak{m}_0\oplus\mathrm{span}\{x_{01},x_{02},x_i,i\geq1\}$$
 the Lie algebra with brackets:
$$[e_1,e_i]=e_{i+1},\ [x_{01},x_1]=-x_1,\ [x_{01},x_k]=kx_k,\ [x_{02},x_1]=x_1,\ [x_1,x_k]=-e_{k+1},$$
$$[x_{01},e_1]=e_1,\ [x_{01},e_i]=(i-2)e_i,\
[x_{02},e_i]=e_i,\ [x_1,e_1]=-e_2,\ [x_k,e_i]=e_{i+k},$$
where $k,i\geq2$.
Obviously, $\mathfrak{\widetilde{m}}_0=\bigoplus^{\infty}_{i=0}(\mathfrak{\widetilde{m}}_0)_i$ is a $\mathbb{Z}$-graded Lie algebra with the graded component
$(\mathfrak{\widetilde{m}}_0)_i=\left\{
                                  \begin{array}{ll}
                                    \mathrm{span}\{x_{01},x_{02}\}, & \hbox{$i=0$;} \\
                                    \mathrm{span}\{e_{i},x_{i}\}, & \hbox{$i\geq 1$.}
                                  \end{array}
                                \right.$
By Lemma \ref{m0}, $\mathrm{Der}(\mathfrak{m}_0)\cong\mathfrak{\widetilde{m}}_0$.

\begin{thm}\label{M01}
$\mathrm{dim}\ \mathrm{BDer}_{k}(\mathfrak{m}_0)=\left\{
                    \begin{array}{ll}
                      1, & \hbox{$k\geq-1$;} \\
                      0, & \hbox{$k\leq-2$.}
                    \end{array}
                  \right.
$
In particular, for $k\geq-1$, $\mathrm{BDer}_{k}(\mathfrak{m}_0)$ is spanned by $\sum_{i\geq2}e^{1,i}_{1+i+k}$.
\end{thm}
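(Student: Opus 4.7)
The plan is to apply Lemma \ref{C}: every $f\in\mathrm{BDer}_k(\mathfrak{m}_0)$ corresponds uniquely to a homogeneous linear map $\varphi_f:\mathfrak{m}_0\to\mathfrak{\widetilde{m}}_0$ of degree $k$ subject to the single condition $[\varphi_f(x),y]+[\varphi_f(y),x]=0$, with the biderivation recovered through $f(e_i,e_j)=[\varphi_f(e_i),e_j]$. Because $\mathfrak{\widetilde{m}}_0$ was constructed so that $(\mathfrak{\widetilde{m}}_0)_0=\mathrm{span}\{x_{01},x_{02}\}$ and $(\mathfrak{\widetilde{m}}_0)_j=\mathrm{span}\{e_j,x_j\}$ for $j\geq 1$, each $\varphi_f(e_i)$ is a linear combination of at most two basis vectors of $(\mathfrak{\widetilde{m}}_0)_{i+k}$ (and vanishes when $i+k<0$). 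The entire proof then reduces to plugging this two-parameter ansatz into the compatibility relation and solving a small linear system in the unknown coefficients.

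I would split the analysis by the sign of $k$. For $k\geq 1$, every index $i+k$ is $\geq 2$, so one can use the uniform bracket rules $[e_{i+k},e_1]=-e_{i+k+1}$ and $[x_{i+k},e_j]=e_{i+j+k}$ for $j\geq 2$ (and zero otherwise). Testing the compatibility at $(e_1,e_1)$, at $(e_1,e_j)$ with $j\geq 2$, and at $(e_i,e_i)$ with $i\geq 2$ peels off one coefficient at a time and leaves a single free parameter, producing exactly the claimed $\sum_{i\geq 2}e^{1,i}_{1+i+k}$. For the two low-weight cases $k=0$ and $k=-1$ the map $\varphi_f$ lands partly in components containing the distinguished elements $x_1$, $x_{01}$, $x_{02}$, whose brackets differ from the generic pattern—specifically $[x_{02},e_1]=0$ and $[x_1,e_j]=0$ for $j\geq 2$. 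Testing the compatibility on the small-index pairs carefully absorbs these exceptions, and then the same propagation collapses everything to a one-dimensional family, again matching the claimed basis.

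For $k\leq-2$, the component $(\mathfrak{\widetilde{m}}_0)_{1+k}$ is already zero, so $\varphi_f(e_1)=0$. The compatibility at $(e_1,e_j)$ then reads $[\varphi_f(e_j),e_1]=0$ and eliminates half of the unknowns in each $\varphi_f(e_j)$; testing subsequently at $(e_2,e_j)$ eliminates the rest, giving $\varphi_f\equiv 0$ and hence $\mathrm{BDer}_k(\mathfrak{m}_0)=0$.

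The main obstacle, though not deep, is organisational: the three distinguished elements $x_{01}, x_{02}, x_1$ of $\mathfrak{\widetilde{m}}_0$ have bracket behaviour that deviates from the generic rule $[x_k,e_i]=e_{i+k}$ (valid only for $k,i\geq 2$), so the compatibility equations must be worked out separately at every pair $(e_i,e_j)$ with $i$ or $j$ in $\{1,2\}$ before the uniform pattern takes over. Once these exceptional indices are dispatched, the remainder of each case is a mechanical linear-algebra calculation.
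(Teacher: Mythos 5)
Your proposal is correct and follows essentially the same route as the paper: invoke Lemma \ref{C} with the explicit model $\mathfrak{\widetilde{m}}_0\cong\mathrm{Der}(\mathfrak{m}_0)$, write each $\varphi_f(e_i)$ as a two-parameter combination in $(\mathfrak{\widetilde{m}}_0)_{i+k}$, and solve the linear system coming from the compatibility relation tested on the diagonal and on pairs involving $e_1$ (and $e_2$), treating $k\le-2$, $k=-1$, $k=0$, $k\ge1$ separately because of the exceptional brackets of $x_{01},x_{02},x_1$. The only cosmetic difference is that for $k\le-2$ the paper kills the remaining coefficients with the diagonal pairs $(e_i,e_i)$ rather than your $(e_2,e_j)$ pairs, but both work.
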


\begin{proof}
By Lemma \ref{C},
for $f\in \mathrm{BDer}_{k}(\mathfrak{m}_0)$, $k\in\Z$, there exists a  linear mapping $\varphi_f:\ \mathfrak{m}_0\rightarrow \mathfrak{\widetilde{m}}_0$ such that, for $i,j\geq1$,
\begin{equation}\label{mb}
f(e_i,e_j)=[\varphi_f(e_i),e_j]=-[\varphi_f(e_j),e_i],
\end{equation}
where $\|f\|=\|\varphi_f\|=k$. Now we determine $\varphi_f$ in different weight $k$.

\begin{flushleft}
$\mathbf{Case\ 1.}\ k\leq-2.$
\end{flushleft}
In this case, $\varphi_f(e_1)=\cdots=\varphi_f(e_{-k-1})=0$. Set $\varphi_f(e_{-k})=\alpha x_{01}+\beta x_{02}$ and $\varphi_f(e_{-k+i})=\lambda_ie_i+\mu_ix_i$ for $i\geq 1$.
Taking $j=1$ in Eq. (\ref{mb}), we get
$[\varphi_f(e_{i}),e_1]=0$.
Thus, we have
\begin{eqnarray*}
&&[\varphi_f(e_{-k}),e_1]=[\alpha x_{01}+\beta x_{02},e_1]=\alpha e_1=0, \\
&&[\varphi_f(e_{-k+1}),e_1]=[\lambda_1e_1+\mu_1x_1,e_1]=-\mu_1e_2=0,\\
&&[\varphi_f(e_{-k+i}),e_1]=[\lambda_ie_i+\mu_ix_i,e_1]=-\lambda_ie_{i+1}=0,
\end{eqnarray*}
where $i\geq 2$. So $\alpha=\mu_1=\lambda_i=0$ for $i\geq 2$. Taking $i=j$ in Eq. (\ref{mb}), we get
$[\varphi_f(e_{i}),e_i]=0$. Thus we have
\begin{eqnarray*}
 &&[\varphi_f(e_{-k}),e_{-k}]=[\beta x_{02},e_{-k}]=\beta e_{-k}=0, \\
 &&[\varphi_f(e_{-k+1}),e_{-k+1}]=[\lambda_1e_1+\mu_1x_1,e_{-k+1}]=\lambda_1e_{-k+2}=0,\\
&&[\varphi_f(e_{-k+i}),e_{-k+i}]=[\lambda_ie_i+\mu_ix_i,e_{-k+i}]=\mu_ie_{-k+2i}=0,
\end{eqnarray*}
where $i\geq2$. So $\beta=\lambda_1=\mu_i=0$ for $i\geq 2$. Thus $f=\varphi_f=0$.

\begin{flushleft}
$\mathbf{Case\ 2.}\ k=-1.$
\end{flushleft}
In this case, we set $\varphi_f(e_1)=\alpha x_{01}+\beta x_{02}$ and $\varphi_f(e_i)=\lambda_{i-1}e_{i-1}+\mu_{i-1}x_{i-1}$ for $i\geq 2$.
Taking $i=j$ in Eq. (\ref{mb}), we get
$[\varphi_f(e_{i}),e_i]=0$. Thus we have
\begin{eqnarray*}
 &&[\varphi_f(e_{1}),e_{1}]=[\alpha x_{01}+\beta x_{02},e_{1}]=\alpha e_{1}=0, \\
 &&[\varphi_f(e_{2}),e_{2}]=[\lambda_{1}e_{1}+\mu_{1}x_{1},e_{2}]=\lambda_1e_3=0,\\
&&[\varphi_f(e_{i}),e_{i}]=[\lambda_{i-1}e_{i-1}+\mu_{i-1}x_{i-1},e_{i}]=\mu_{i-1}e_{2i-1}=0,
\end{eqnarray*}
where $i\geq3$. So $\alpha=\lambda_1=\mu_{i-1}=0$ for $i\geq 3$.
Taking $i=1$ in Eq. (\ref{mb}), we get
$[\varphi_f(e_{1}),e_j]=-[\varphi_f(e_{j}),e_1]$.
Thus, we have
\begin{eqnarray*}
   &&[\varphi_f(e_{1}),e_2]=-[\varphi_f(e_{2}),e_1]=\beta e_2=\mu_1e_2, \\
  &&[\varphi_f(e_{1}),e_j]=-[\varphi_f(e_{j}),e_1]=\beta e_j=\lambda_{j-1}e_j,
\end{eqnarray*}
where $j\geq 3$. So $\beta=\mu_1=\lambda_{j-1}$ for $j\geq 3$.
Thus,
\begin{eqnarray*}
&&f(e_1,e_i)=[\varphi_f(e_1),e_i]=[\beta x_{02},e_i]=\beta e_i,\ i\geq2, \\
&&f(e_2,e_i)=[\varphi_f(e_2),e_i]=[\beta x_1,e_i]=0 ,\ i\geq 3,\\
&&f(e_i,e_j)=[\varphi_f(e_i),e_j]=[\beta e_{i-1},e_j]=0  ,\ j>i\geq 3.
\end{eqnarray*}
That is,
 $f=\beta\sum_{i\geq2}e^{1,i}_{i}$.

\begin{flushleft}
$\mathbf{Case\ 3.}\ k=0.$
\end{flushleft}
In this case, we set  $\varphi_f(e_i)=\lambda_{i}e_{i}+\mu_{i}x_{i}$ for $i\geq 1$.
Taking $i=j$ in Eq. (\ref{mb}), we get
$[\varphi_f(e_{i}),e_i]=0$. Thus we have
\begin{eqnarray*}
   &&[\varphi_f(e_{1}),e_1]=[\lambda_{1}e_{1}+\mu_{1}x_{1},e_1]=-\mu_1 e_2=0, \\
  &&[\varphi_f(e_{i}),e_i]=[\lambda_{i}e_{i}+\mu_{i}x_{i},e_i]=\mu_{i}e_{2i}=0,
\end{eqnarray*}
where $i\geq 2$. So $\mu_i=0$ for $i\geq 1$. Taking $i=1$ in Eq. (\ref{mb}), we get
$[\varphi_f(e_1),e_j]=-[\varphi_f(e_j),e_1]$. Thus, we have
\begin{equation*}
[\varphi_f(e_1),e_j]=-[\varphi_f(e_j),e_1]=\lambda_1e_{j+1}=\lambda_je_{j+1},
\end{equation*}
where $j\geq 2$. So $\lambda_1=\lambda_j$ for $j\geq 1$. Thus,
\begin{eqnarray*}
&&f(e_1,e_i)=[\varphi(e_1),e_i]=[\lambda_1e_1,e_i]=\lambda_1e_{i+1},\ i\geq2,\\
&&f(e_i,e_j)=[\varphi(e_i),e_j]=[\lambda_1e_{i},e_j]=0,\ j>i\geq2.
\end{eqnarray*}
That is,  $f=\lambda_1\sum_{i\geq 2}e^{1,i}_{1+i}$.

\begin{flushleft}
$\mathbf{Case\ 4.}\ k\geq1.$
\end{flushleft}
In this case, we set  $\varphi_f(e_i)=\lambda_{i}e_{i+k}+\mu_{i}x_{i+k}$ for $i\geq 1$.
Taking $i=j$ in Eq. (\ref{mb}), we get
$[\varphi_f(e_{i}),e_i]=0$. Thus we have
\begin{eqnarray*}
   &&[\varphi_f(e_{1}),e_1]=[\lambda_{1}e_{k+1}+\mu_{1}x_{k+1},e_1]=-\lambda_1 e_{k+2}=0, \\
  &&[\varphi_f(e_{i}),e_i]=[\lambda_{i}e_{k+i}+\mu_{i}x_{k+i},e_i]=\mu_{i}e_{k+2i}=0,
\end{eqnarray*}
where $i\geq 2$. So $\lambda_1=\mu_i=0$ for $i\geq 2$. Taking $i=1$ in Eq.
(\ref{mb}), we get
$[\varphi_f(e_1),e_j]=-[\varphi_f(e_j),e_1]$. Thus, we have
\begin{equation*}
[\varphi_f(e_1),e_j]=-[\varphi_f(e_j),e_1]=\mu_1e_{k+j+1}=\lambda_je_{k+j+1},
\end{equation*}
where $j\geq 2$. So $\mu_1=\lambda_j$ for $j\geq 2$. Thus,
\begin{eqnarray*}
&&f(e_1,e_i)=[\varphi(e_1),e_i]=[\mu_1e_1,e_i]=\mu_1e_{i+1},\ i\geq2,\\
&&f(e_i,e_j)=[\varphi(e_i),e_j]=[\mu_1e_{i+k},e_j]=0,\ j>i\geq2.
\end{eqnarray*}
That is, $f=\mu_1\sum_{i\geq 2}e^{1,i}_{1+i+k}$.

In conclusion,
$$f=\left\{
                    \begin{array}{ll}
                      \lambda_k\sum_{i\geq 2}e^{1,i}_{1+i+k}, & \hbox{$\lambda_k\in\mathbb{F},\ k\geq-1$;} \\
                      0, & \hbox{$k\leq-2$.}
                    \end{array}
                  \right.
$$
The proof is complete.
\end{proof}

\begin{cor}
$\mathrm{BDer}_{0}(\mathfrak{m}_0)=\mathrm{IBDer}(\mathfrak{m}_0)$.
\end{cor}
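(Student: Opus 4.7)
The plan is to combine the explicit description of $\mathrm{BDer}_0(\mathfrak{m}_0)$ from Theorem \ref{M01} with the definition of inner biderivations, by showing mutual inclusion.

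First I would observe the easy containment $\mathrm{IBDer}(\mathfrak{m}_0)\subseteq \mathrm{BDer}_0(\mathfrak{m}_0)$. For any $\lambda\in\mathbb{F}$ the inner biderivation $f_\lambda(x,y)=\lambda[x,y]$ satisfies $f_\lambda(\mathfrak{g}_i,\mathfrak{g}_j)\subseteq \mathfrak{g}_{i+j}$ because the Lie bracket respects the $\mathbb{N}$-grading; hence every inner biderivation is homogeneous of weight $0$.

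Next I would verify the reverse inclusion. By Theorem \ref{M01} the space $\mathrm{BDer}_0(\mathfrak{m}_0)$ is one-dimensional, spanned by $g=\sum_{i\geq 2}e^{1,i}_{1+i}$. So it suffices to exhibit $g$ as an inner biderivation. Comparing on basis pairs, $g(e_1,e_i)=e_{i+1}=[e_1,e_i]$ for $i\geq 2$, and $g(e_i,e_j)=0=[e_i,e_j]$ for $j>i\geq 2$. Since the non-trivial brackets of $\mathfrak{m}_0$ are exactly $[e_1,e_i]=e_{i+1}$ for $i\geq 2$, this shows $g=f_1\in\mathrm{IBDer}(\mathfrak{m}_0)$.

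Combining the two inclusions gives the desired equality. There is no real obstacle here: the theorem is essentially a read-off from Theorem \ref{M01} once one notices that the unique (up to scalar) weight-$0$ biderivation agrees with the bracket on every basis pair.
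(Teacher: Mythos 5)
Your proof is correct and matches the paper's (implicit) reasoning: the paper states this corollary without proof as an immediate read-off from Theorem \ref{M01}, and your verification that the spanning element $\sum_{i\geq 2}e^{1,i}_{1+i}$ coincides with $f_1(x,y)=[x,y]$, together with the observation that inner biderivations have weight $0$, is exactly the intended argument.
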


\begin{thm}
A linear mapping of $\mathfrak{m}_0$ is commuting if and only if it is a scalar of $\sum_{i\geq1}e^{i}_{i}$.
\end{thm}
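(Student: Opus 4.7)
The ``if'' direction is immediate: any scalar multiple of the identity, $\lambda\sum_{i\geq1}e_i^i$, satisfies $[\lambda x,x]=0$ for all $x$. For the converse, my approach is to work directly from the commuting identity together with its polarization, exploiting the fact that $\mathfrak{m}_0$ has the very sparse bracket $[e_1,e_i]=e_{i+1}$ ($i\geq2$), with all other brackets zero. Expand $\phi(e_j)=\sum_{i\geq1}\lambda_{ij}e_i$ as a finite sum for each $j\geq 1$.

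First I would apply $[\phi(e_1),e_1]=0$ and use $[e_i,e_1]=-e_{i+1}$ for $i\geq2$ to force $\lambda_{i1}=0$ for $i\geq2$, giving $\phi(e_1)=\lambda_{11}e_1$. Second, I would apply $[\phi(e_j),e_j]=0$ for $j\geq2$. Since $[e_i,e_j]=0$ whenever $i,j\geq2$, the only surviving contribution is $\lambda_{1j}[e_1,e_j]=\lambda_{1j}e_{j+1}$, so $\lambda_{1j}=0$ for $j\geq2$ and thus $\phi(e_j)\in\bigoplus_{i\geq2}\mathbb{F}e_i$ for $j\geq 2$.

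Third, I would polarize the commuting identity (via $[\phi(e_1+e_j),e_1+e_j]=0$) to $[\phi(e_1),e_j]+[\phi(e_j),e_1]=0$ for $j\geq2$. Substituting the information from the previous steps yields $\lambda_{11}e_{j+1}-\sum_{i\geq2}\lambda_{ij}e_{i+1}=0$, which matched coefficient by coefficient gives $\lambda_{jj}=\lambda_{11}$ and $\lambda_{ij}=0$ for $i\geq2$, $i\neq j$. The remaining polarized conditions with both $i,j\geq2$ are automatic, since all such $[e_i,e_j]=0$. Hence $\phi(e_j)=\lambda_{11}e_j$ for every $j\geq1$, i.e., $\phi=\lambda_{11}\sum_{i\geq1}e_i^i$.

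An alternative route would use Lemma \ref{linear} together with Theorem \ref{M01}: from the commuting identity the bilinear form $[x,\phi(y)]$ is a biderivation, and the classification in Theorem \ref{M01} would drastically constrain it, leaving only the determination of $\phi(e_1)$ to be handled by the direct commuting condition. I would not expect a serious obstacle in either route; the main subtlety, if anything, is ensuring that the polarization step extracts both the off-diagonal vanishing $\lambda_{ij}=0$ ($i\neq j$) and the diagonal coupling $\lambda_{jj}=\lambda_{11}$ across all $j$ simultaneously, which it does because the equation $\lambda_{11}e_{j+1}=\sum_{i\geq2}\lambda_{ij}e_{i+1}$ sorts the coefficients by degree.
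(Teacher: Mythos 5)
Your proof is correct, and it takes a genuinely different route from the paper. The paper first assumes $\phi$ is homogeneous of some weight $k$ (so $\phi(e_i)=a_ie_{i+k}$), feeds the associated bilinear form $[x,\phi(y)]$ into Lemma \ref{linear}, and then uses the full classification of $\mathrm{BDer}_k(\mathfrak{m}_0)$ from Theorem \ref{M01} to pin down $\phi$ weight by weight. You instead work directly from the commuting identity and its polarization $[\phi(e_1),e_j]+[\phi(e_j),e_1]=0$, using only the sparsity of the bracket of $\mathfrak{m}_0$; all three of your computational steps check out ($\lambda_{i1}=0$ for $i\geq2$ from $[\phi(e_1),e_1]=0$; $\lambda_{1j}=0$ for $j\geq2$ from $[\phi(e_j),e_j]=0$; and the degree-sorted identity $\lambda_{11}e_{j+1}=\sum_{i\geq2}\lambda_{ij}e_{i+1}$ forcing $\lambda_{jj}=\lambda_{11}$ and the off-diagonal entries to vanish). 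Your approach buys two things: it is entirely elementary, needing neither the biderivation classification nor Lemma \ref{linear}, and it handles an arbitrary linear commuting map $\phi(e_j)=\sum_i\lambda_{ij}e_i$ in one pass, whereas the paper's argument silently reduces to homogeneous $\phi$ of a single weight $k$ (a reduction that itself requires a small polarization argument the paper does not spell out). What the paper's route buys is uniformity: the same template is reused verbatim for $L_1$ and $\mathfrak{m}_2$ once their biderivations are known.
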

\begin{proof}
The `if' direction is easy to verify. We now prove the `only if' direction.
Suppose that $\phi$ is linear commuting mapping of weight $k$. That is to say that $\phi(e_i)=a_ie_{i+k}$ for $i\geq1$. By Lemma \ref{linear}, $\phi$ defines a biderivation $\phi_{f}\in\mathrm{BDer}_{k}(\mathfrak{m}_0)$.
If $k\leq-2$, then $\phi_{f}=0$ by Theorem \ref{M01}. So $[x,\phi(y)]=0$ for all $x,y\in\mathfrak{m}_0$. From $\phi(y)\in\mathrm{C}(\mathfrak{m}_0)=0$, we have $\phi=0$. If $k\geq-1$, then
$\phi_{f}=\lambda\sum_{i\geq2}e^{1,i}_{1+i+k}$ for some $\lambda\in\mathbb{F}$.
\begin{flushleft}
$\mathbf{Case\ 1.}\ k\geq-1, k\neq 0.$
\end{flushleft}
From $\phi_{f}(e_j,e_1)=\lambda\sum_{i\geq2}e^{1,i}_{1+i+k}(e_j,e_1)$ for $j\geq2$, we have
$[e_j,a_1e_{k+1}]=-\lambda e_{1+j+k}$.
Since $k\neq 0$, we have $[e_j,e_{k+1}]=0$. So $\lambda=0$. We have $\phi_{f}(x,y)=[x,\phi(y)]=0$ for all $x,y\in\mathfrak{m}_0$. So $\phi(y)\in \mathrm{C}(\mathfrak{m}_0)=0$. Then $\phi=0$.
\begin{flushleft}
$\mathbf{Case\ 2.}\ k=0.$
\end{flushleft}
From $\phi_{f}(e_1,e_j)=\lambda\sum_{i\geq2}e^{1,i}_{1+i}(e_1,e_j)$ for $j\geq2$,
we have $[e_1,a_je_j]=\lambda e_{1+j}$. Thus $a_j=\lambda$, $j\geq2$.
Moreover, for $j\geq2$, from $\phi_{f}(e_j,e_1)=-\phi_{f}(e_1,e_j)=-\lambda e_{1+j}$, we have
$[e_j,a_1e_1]=-\lambda e_{1+j}$. Thus $a_1=\lambda$. So $\phi=\lambda\sum_{i\geq1}e^{i}_{i}$.
\end{proof}
Here we characterize local and 2-local derivations of $\mathfrak{m}_0$ by the result of derivations.

\begin{thm}
$\mathrm{LDer}(\mathfrak{m}_0)=\mathrm{BLDer}(\mathfrak{m}_0)=\mathrm{Der}(\mathfrak{m}_0).$
\end{thm}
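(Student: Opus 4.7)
The plan is to use the remark that $\mathrm{Der}(\mathfrak{m}_0)\subseteq\mathrm{BLDer}(\mathfrak{m}_0)\subseteq\mathrm{LDer}(\mathfrak{m}_0)$ and establish only the reverse inclusion $\mathrm{LDer}(\mathfrak{m}_0)\subseteq\mathrm{Der}(\mathfrak{m}_0)$; together these sandwich the three sets into equality. By Lemma~\ref{ld}, any local derivation decomposes as a sum of homogeneous local derivations, so the problem reduces to proving $\mathrm{LDer}_k(\mathfrak{m}_0)\subseteq\mathrm{Der}_k(\mathfrak{m}_0)$ for every $k\in\mathbb{Z}$. The case $k\leq-1$ is immediate from Lemma~\ref{m0}, which gives $\mathrm{Der}_k(\mathfrak{m}_0)=0$.

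For $k\geq 1$, take $\Delta\in\mathrm{LDer}_k(\mathfrak{m}_0)$. Since every $D\in\mathrm{Der}_k(\mathfrak{m}_0)$ has the form $D=ae_1^{1+k}+b\sum_{i\geq 2}e_i^{i+k}$, the local derivation property on each basis vector forces $\Delta(e_1)=\alpha e_{1+k}$ and $\Delta(e_j)=\beta_j e_{j+k}$ for $j\geq 2$, for some scalars $\alpha,\beta_j$. The key probing step is to apply the local derivation property to $x=e_j+e_l$ for distinct $j,l\geq 2$: the associated derivation $D_x=ae_1^{1+k}+b\sum_{i\geq 2}e_i^{i+k}$ must satisfy $D_x(x)=b(e_{j+k}+e_{l+k})$, while $\Delta(x)=\beta_j e_{j+k}+\beta_l e_{l+k}$. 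Matching coefficients yields $\beta_j=\beta_l=b$, hence all $\beta_j$ coincide, and $\Delta=\alpha e_1^{1+k}+b\sum_{i\geq 2}e_i^{i+k}\in\mathrm{Der}_k(\mathfrak{m}_0)$.

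For $k=0$, the derivation algebra is two-parameter, spanned (by Lemma~\ref{m0}) by $\sum_{i\geq 2}e_i^i$ and $e_1^1+\sum_{i\geq 3}(i-2)e_i^i$. Homogeneity of weight $0$ forces $\Delta(e_i)=\gamma_i e_i$ for some scalars $\gamma_i$. The plan is to probe with the three-term element $x=e_1+e_2+e_i$ for each $i\geq 3$. Writing the associated derivation as $D_x=a\sum_{j\geq 2}e_j^j+b\bigl(e_1^1+\sum_{j\geq 3}(j-2)e_j^j\bigr)$ and comparing $D_x(x)=be_1+ae_2+\bigl(a+b(i-2)\bigr)e_i$ with $\Delta(x)=\gamma_1 e_1+\gamma_2 e_2+\gamma_i e_i$ yields $b=\gamma_1$, $a=\gamma_2$, and the consistency relation $\gamma_i=\gamma_2+\gamma_1(i-2)$ for all $i\geq 3$. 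This is precisely the form of an element of $\mathrm{Der}_0(\mathfrak{m}_0)$, so $\Delta=\gamma_2\sum_{i\geq 2}e_i^i+\gamma_1\bigl(e_1^1+\sum_{i\geq 3}(i-2)e_i^i\bigr)\in\mathrm{Der}_0(\mathfrak{m}_0)$.

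The main obstacle is the $k=0$ case, where the two-parameter family of derivations requires a probing element that simultaneously involves $e_1$, $e_2$ and $e_i$ in order to pin down both free parameters and force the linear constraint on $\gamma_i$; shorter probes only recover one parameter at a time. The other cases reduce to a one-parameter matching or are vacuous.
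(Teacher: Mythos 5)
Your proposal is correct and follows essentially the same route as the paper: decompose via Lemma~\ref{ld} into homogeneous local derivations, dispose of $k\leq -1$ using $\mathrm{Der}_k(\mathfrak{m}_0)=0$, probe with two-term sums $e_j+e_l$ ($j,l\geq 2$) for $k\geq 1$, and probe with the three-term element $e_1+e_2+e_i$ for $k=0$ to pin down both parameters. The coefficient matching in each case matches the paper's computation.
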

\begin{proof}
By Lemmas \ref{ld} and \ref{m0}, it is sufficient to prove
$\mathrm{LDer}_{k}(\mathfrak{m}_0)\subseteq\mathrm{Der}(\mathfrak{m}_0)$ for $k\geq0$.
By Lemma \ref{m0}, we prove that in the following  cases.
\begin{flushleft}
$\mathbf{Case\ 1.}\ k=0.$
\end{flushleft}
Suppose that $\Delta\in\mathrm{LDer}_{0}(\mathfrak{m}_0)$ is a local derivation. For $\Delta(e_i)$, $i\geq1$, by definition of local derivations,  there exist $a_i$, $b_i\in \F$, such that
\begin{eqnarray}\label{44}
    \Delta(e_1)&=&b_1e_{1}, \\
    \Delta(e_2)&=&a_2e_2, \\
   \Delta(e_i)&=&(a_i+(i-2)b_i)e_i,\ i\geq 3.
\end{eqnarray}
By linearity of $\Delta$, for $i\geq 3$,
\begin{equation}\label{11}
\Delta(e_1+e_2+e_i)=\Delta(e_1)+\Delta(e_2)+\Delta(e_i)=b_1e_{1}+a_2e_2+(a_i+(i-2)b_i)e_i.
\end{equation}
For $\Delta(e_1+e_2+e_i)$, by definition, there exist $a_{12i}$, $b_{12i}$, such that
\begin{equation}\label{22}
\Delta(e_1+e_2+e_i)=a_{12i}(e_2+e_i)+b_{12i}e_{1}+b_{12i}(i-2)e_i.
\end{equation}
Comparing Eqs. (\ref{11}) with (\ref{22}), we get $a_{2}+b_{1}(i-2)=a_i+(i-2)b_i$. From Eq. (2.1.3),
\begin{equation}\label{313}
\Delta(e_i)=(a_{2}+b_{1}(i-2))e_i,\ i\geq 3.
\end{equation}
From Eqs. (2.1.1), (2.1.2) and (\ref{313}) and Lemma \ref{m0}, we have
$$\Delta=a_2\sum_{i\geq2}e^{i}_i+b_1\bigg(e^{1}_{1}+\sum_{i\geq3}(i-2)e^{i}_i\bigg)\in\mathrm{Der}_0(\mathfrak{m}_0).$$

\begin{flushleft}
$\mathbf{Case\ 2.}\ k\geq1.$
\end{flushleft}
Suppose that $\Delta_k\in\mathrm{LDer}_{k}(\mathfrak{m}_0)$ is a local derivation. For $\Delta_k(e_i)$, $i\geq1$, by definition of local derivations,  there exist $a_{k;i}$, $b_{k;i}\in \F$, such that
\begin{equation*}
    \Delta_k(e_1)=a_{k;1}e_{1+k},\quad \mathrm{and}\quad
   \Delta_k(e_i)=b_{k;i}e_{i+k},\ i\geq 2.
\end{equation*}
For $\Delta_k(e_2+e_i)$, $i\geq 3$, there exist $b_{k;2,i}\in\F$, such that
\begin{equation}\label{111}
 \Delta_k(e_2+e_i)=b_{k;2,i}\sum_{j\geq2}e^{j+k}_j(e_2+e_i)=b_{k;2,i}(e_{2+k}+e_{i+k}).
\end{equation}
On the other hand,
\begin{equation}\label{1111}
\Delta_k(e_2+e_i)=\Delta_k(e_2)+\Delta_k(e_i)=b_{k;2}e_{2+k}+b_{k;i}e_{i+k}.
\end{equation}
Comparing Eqs. (\ref{111}) with (\ref{1111}), we have $b_{k;2}=b_{k;i}$ for $i\geq 3$.
So $\Delta_k=a_{k;1}e_{1}^{1+k}+b_{k;2}\sum_{i\geq2}e^{i+k}_i\in\mathrm{Der}_k(\mathfrak{m}_0)$.
\end{proof}

\begin{rem}
 The 2-local derivations of  $\mathfrak{m}_0$ were also studied in \cite{M0,l1}. In fact, there are a few non-linear 2-local derivations which are not derivations (see \cite{M0,l1}, for example).
For $m,q\in \N$, $\lambda\in \C$ and $\theta=(\theta_2,\ldots,\theta_m)\in \C^{m-1}$, define the mapping $\Omega^{(q,m)}_{\theta,\lambda}$ of $\mathfrak{m}_0$ by
$$\Omega^{(q,m)}_{\theta,\lambda}\left(\sum_{i=1}^{p}k_ie_i\right)=
\left\{
  \begin{array}{ll}
    \sum\limits_{i=2}^{p}\sum\limits_{j=2}^{m}k_i\theta_je_{i+j-2}, & \hbox{if $k_1\neq 0$;} \\
    \lambda k_qe_q, & \hbox{if $x=k_q e_q$ for some $q$ with $2<q\leq p$;} \\
    0, & \hbox{others.}
  \end{array}
\right.$$
In \cite{M0}, the author prove that every 2-local derivation (not necessarily linear) $\Delta$ of  $\mathfrak{m}_0$ is of the form $\Delta=D+\Omega^{(q,m)}_{\theta,\lambda}$ for some $D\in\mathrm{Der}(\mathfrak{m}_0)$ \cite[Theorem 4.2]{M0}.
 Using this theorem, we can  give another proof that if a 2-local derivation of $\mathfrak{m}_0$
is linear, then it is a derivation.
Suppose that $\Delta=D+\Omega^{(q,m)}_{\theta,\lambda}$ is a 2-local derivation (not necessarily linear), where $D\in\mathrm{Der}(\mathfrak{m}_0)$ and $\Omega^{(q,m)}_{\theta,\lambda}$.
Then
$$\Omega^{(q,m)}_{\theta,\lambda}(e_1)=\Omega^{(q,m)}_{\theta,\lambda}(e_2)=0, \ \Omega^{(q,m)}_{\theta,\lambda}(e_q)=\lambda e_q.$$
If $\Delta$ is linear, then $\Omega^{(q,m)}_{\theta,\lambda}$ is additive. Thus, we have
\begin{eqnarray*}
  &&\Omega^{(q,m)}_{\theta,\lambda}(e_1+e_2)=\sum_{j=2}^{m}\theta_j e_j=0, \\
  && \Omega^{(q,m)}_{\theta,\lambda}(e_2+e_q)=0=\lambda e_q.
\end{eqnarray*}
Then $\theta_2=\cdots=\theta_m=\lambda=0$. That is, $\Omega^{(q,m)}_{\theta,\lambda}=0$ and $\Delta=D\in\mathrm{Der}(\mathfrak{m}_0)$.
\end{rem}

\section{$\mathbb{N}$-graded Lie algebra  $L_{1}$}


Different from $\mathfrak{m}_0$ and $\mathfrak{m}_2$, the derivarion algebra of $L_1$ has been described as the
inner derivation algebra of $\F e_0\ltimes L_1$.

\begin{lem}\cite{l1}\label{L'}
$\mathrm{Der}(L_1)=\mathrm{ad}_{L_1}(\F e_0\ltimes L_1)$, where
$\F e_0\ltimes L_1$ has the bracktes: $[e_i,e_j]=(j-i)e_{i+j}$ for all $i,j\geq0$.
\end{lem}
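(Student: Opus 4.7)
The plan is to mirror the weight-component strategy that Lemma \ref{ld} and Lemma \ref{m0} already use for $\mathfrak{m}_0$: decompose $\mathrm{Der}(L_1)=\bigoplus_{k\in\mathbb{Z}}\mathrm{Der}_k(L_1)$ using the $\mathbb{N}$-grading, compute each weight piece separately by turning the derivation identity into a scalar recurrence, and then recognize the answer as $\mathrm{ad}_{L_1}$ of the extension $\mathbb{F}e_0\ltimes L_1$. Note that $\mathrm{ad}(e_0)$, viewed as a map of $L_1$, sends $e_j\mapsto j\,e_j$, so it is a genuine weight-$0$ derivation of $L_1$ even though $e_0\notin L_1$.

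First I would fix $k\in\mathbb{Z}$ and $D\in\mathrm{Der}_k(L_1)$ and write $D(e_i)=a_i e_{i+k}$ with the convention $a_i=0$ whenever $i+k\le 0$. Plugging into $D[e_i,e_j]=[D(e_i),e_j]+[e_i,D(e_j)]$ using $[e_i,e_j]=(j-i)e_{i+j}$ gives the scalar recurrence
\begin{equation*}
(j-i)\,a_{i+j}=(j-i-k)\,a_i+(j+k-i)\,a_j,\qquad i,j\ge 1.
\end{equation*}
Specializing $i=1$ yields $(j-1)a_{j+1}=(j-1-k)a_1+(j+k-1)a_j$, which I would use as the main one-parameter recursion that expresses every $a_j$ in terms of $a_1$ and $a_2$; the pair $(i,j)=(2,3)$ then imposes one further linear constraint that pins the free parameters down.

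Next I would do the case analysis. For $k\ge 1$, the recursion determines every $a_i$ from $a_1$, so $\dim\mathrm{Der}_k(L_1)\le 1$, and since $\mathrm{ad}(e_k)$ is a nonzero weight-$k$ inner derivation we get $\mathrm{Der}_k(L_1)=\mathbb{F}\,\mathrm{ad}(e_k)$. For $k=0$ the recursion forces $a_j=j\,a_1/1$ (i.e.\ $a_j$ proportional to $j$), so $\mathrm{Der}_0(L_1)=\mathbb{F}\,\mathrm{ad}(e_0)$, which is not inner in $L_1$ but is inner in $\mathbb{F}e_0\ltimes L_1$. For $k\le -1$, the boundary vanishing $a_1=\cdots=a_{-k}=0$ forced by $a_i=0$ for $i+k\le 0$, propagated through the recursion, forces $a_i=0$ for all $i$, so $\mathrm{Der}_k(L_1)=0$.

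Assembling the weight pieces gives
\begin{equation*}
\mathrm{Der}(L_1)=\mathbb{F}\,\mathrm{ad}(e_0)\oplus\bigoplus_{k\ge 1}\mathbb{F}\,\mathrm{ad}(e_k)=\mathrm{ad}_{L_1}(\mathbb{F}e_0\ltimes L_1),
\end{equation*}
which is the claim. The main obstacle is the case $k\le -1$: one must check carefully that the recursion, started from the forced zero initial segment, really kills all later $a_i$ rather than leaving a hidden free parameter at some $j$ where the leading coefficient $j-1$ or $j-i$ in the recurrence vanishes. Handling those degenerate indices (and in particular the boundary index $i=-k+1$) is where a careful induction is needed; once that is done, the rest is bookkeeping.
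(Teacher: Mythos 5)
The paper does not actually prove this lemma: it is quoted from \cite{l1} (and its homogeneous version is restated later as Lemma \ref{LL1}, again by appeal to the cohomology literature). Your proposal supplies a direct computational proof, which is a legitimately different route: you decompose $\mathrm{Der}(L_1)$ into weight components, translate the derivation identity into the scalar recurrence $(j-i)a_{i+j}=(j-i-k)a_i+(j+k-i)a_j$, and solve it case by case. The recurrence is correct, the case $k\le -1$ is handled soundly (including the delicate boundary at $k=-1$, where $a_2$ is only killed by the $(i,j)=(2,3)$ relation), and identifying the weight-$0$ solution $a_j=j\,a_1$ with $\mathrm{ad}(e_0)$ restricted to $L_1$ is exactly what realizes $\mathrm{Der}(L_1)$ inside $\mathrm{ad}_{L_1}(\F e_0\ltimes L_1)$. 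What the citation buys the paper is brevity; what your computation buys is a self-contained argument in the same style as the paper's treatment of $\mathfrak{m}_0$ and $\mathfrak{m}_2$.

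Two points need tightening before this is complete. First, for $k\ge 1$ you assert that the $i=1$ recursion determines every $a_i$ from $a_1$ alone, hence $\dim\mathrm{Der}_k(L_1)\le 1$; in fact $(j-1)a_{j+1}=(j-1-k)a_1+(j+k-1)a_j$ only determines $a_3,a_4,\dots$ from the \emph{pair} $(a_1,a_2)$, so a priori the solution space is $2$-dimensional and you must verify that the cross-relation at degree $5$ (comparing $(i,j)=(1,4)$ with $(2,3)$) is a nontrivial linear condition for every $k\ge 1$ --- it is, but that is a computation, not a consequence of the recursion's shape, and your own earlier sentence already concedes the two parameters. Second, summing the weight pieces back up requires knowing that a derivation has only finitely many nonzero homogeneous components; since you show $\mathrm{Der}_k(L_1)=0$ for $k<0$ and each nonzero $D_k$ ($k\ge 0$) acts nontrivially on $e_1$ or $e_2$, finiteness follows from $D(e_1)$ and $D(e_2)$ being finite sums, but this should be said (the paper glosses over the same point in Lemma \ref{ld}).
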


\begin{thm}\label{l}
$\mathrm{IBDer}(L_1)=\mathrm{BDer}(L_1)$.
\end{thm}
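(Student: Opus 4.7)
My plan is to apply Lemma \ref{C} with the ambient graded Lie algebra $\widetilde{L_1}:=\F e_0\ltimes L_1$ supplied by Lemma \ref{L'}, which identifies $\mathrm{Der}(L_1)$ with $\mathrm{ad}_{L_1}\widetilde{L_1}$. Consequently, every homogeneous biderivation $f\in\mathrm{BDer}_k(L_1)$ corresponds to a weight-$k$ linear map $\varphi_f\colon L_1\to\widetilde{L_1}$ satisfying
$$f(x,y)=[\varphi_f(x),y]=-[\varphi_f(y),x].$$
Because every graded piece of $\widetilde{L_1}$ is one-dimensional ($(\widetilde{L_1})_0=\F e_0$ and $(\widetilde{L_1})_i=\F e_i$ for $i\geq 1$), I can write $\varphi_f(e_i)=\alpha_i e_{i+k}$ for scalars $\alpha_i\in\F$, with the convention $\alpha_i=0$ whenever $i+k<0$.

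The argument then reduces to short computations in the Witt bracket $[e_m,e_n]=(n-m)e_{m+n}$ on $\widetilde{L_1}$. The skew-symmetry constraint $[\varphi_f(e_i),e_i]=0$ becomes $-k\alpha_i\,e_{2i+k}=0$, so as soon as $k\neq 0$ we obtain $\alpha_i=0$ for every $i$, hence $\mathrm{BDer}_k(L_1)=0$. For $k=0$ this particular constraint is vacuous, but the fuller identity $[\varphi_f(e_i),e_j]=-[\varphi_f(e_j),e_i]$ rewrites as $\alpha_i(j-i)=\alpha_j(j-i)$, forcing all $\alpha_i$ to equal a single scalar $\alpha$; then $f(e_i,e_j)=\alpha(j-i)e_{i+j}=\alpha[e_i,e_j]$, which is an inner biderivation. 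A weight decomposition argument in the spirit of Lemma \ref{ld} then transfers the conclusion from homogeneous biderivations to the whole of $\mathrm{BDer}(L_1)$.

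This situation is materially simpler than the $\mathfrak{m}_0$ case handled in Theorem \ref{M01}, since the one-dimensionality of every graded piece of $\widetilde{L_1}$ eliminates the delicate two-parameter ansätze encountered there. The only point requiring genuine care is verifying that the Witt bracket contributes the nonvanishing factor $-k$ in the skew-symmetry identity, which collapses every nonzero weight in a single stroke; without that cancellation one would have to invoke additional relations to rule out $\mathrm{BDer}_k(L_1)$ for $k\neq 0$.
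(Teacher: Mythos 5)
Your proposal is correct and follows essentially the same route as the paper: both invoke Lemma \ref{C} together with Lemma \ref{L'} to realize a homogeneous biderivation via a map $\varphi_f$ into $\F e_0\ltimes L_1$, kill all nonzero weights with the constraint $[\varphi_f(e_i),e_i]=0$, and in weight zero deduce that all coefficients coincide, yielding an inner biderivation. The only difference is cosmetic: you make explicit the factor $-k$ coming from the Witt bracket and the reduction to homogeneous components, both of which the paper leaves implicit.
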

\begin{proof}
Suppose that $f$ is a biderivation of $L_1$. By Lemmas \ref{C} and \ref{L'}, there exists a mapping $\varphi_f:\ L_1\rightarrow L_1\oplus\F e_0$ such that, for $i,j\geq 1$,
\begin{equation}\label{L}
f(e_i,e_j)=[\varphi_f(e_i),e_j]=-[\varphi_f(e_j),e_i].
\end{equation}
Suppose that $\|f\|=\|\varphi_f\|=k$.
If $k\leq-1$, we set $\varphi_f(e_{-k+i})=\lambda_ie_{i}$, $i\geq 0$.
If $k\geq0$, we set $\varphi_f(e_i)=\lambda_ie_{k+i}$, $i\geq 1$.
Then, if $k\neq 0$, we have $\lambda_i=0$ by taking $i=j$ in Eq. (\ref{L}).
So $f=\varphi_f=0$.
Now we suppose that $k=0$. Then, from Eq. (\ref{L}),  we get $\lambda_i=\lambda_j$ for $i,j\geq 1$. Set $\lambda_i=\lambda$ for $i\geq 1$.
Then $f=f_\lambda$ is an inner biderivation.
\end{proof}

\begin{thm}
A linear mapping of $L_1$ is commuting if and only if it is a scalar multiplication mapping of $L_1$.
\end{thm}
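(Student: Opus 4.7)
The \emph{if} direction is immediate: for $\phi=\lambda\cdot\mathrm{id}_{L_1}$ one has $[\phi(x),x]=\lambda[x,x]=0$ for every $x\in L_1$, so such a map is commuting.

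For the converse, my plan is to mimic the strategy used for $\mathfrak{m}_0$: convert the commuting condition into a biderivation, apply the structural classification already established for $L_1$, and finally exploit centerlessness. Concretely, let $\phi$ be any linear commuting mapping of $L_1$, and consider the bilinear form
\[
\phi_f(x,y)=[x,\phi(y)],\qquad x,y\in L_1.
\]
By Lemma \ref{linear}, $\phi_f$ is a biderivation of $L_1$. Theorem \ref{l} tells us that $\mathrm{BDer}(L_1)=\mathrm{IBDer}(L_1)$, so there exists $\lambda\in\mathbb{F}$ with $\phi_f=f_\lambda$; that is,
\[
[x,\phi(y)]=\lambda[x,y]=[x,\lambda y]\quad\text{for all }x,y\in L_1.
\]
Therefore $[x,\phi(y)-\lambda y]=0$ for every $x\in L_1$, which says $\phi(y)-\lambda y\in \mathrm{C}(L_1)$. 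Since $L_1$ is centerless (as noted in the remark following the definition of the center), we conclude $\phi(y)=\lambda y$ for all $y$, i.e.\ $\phi$ is scalar multiplication by $\lambda$.

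The only real technicality, and what I expect to be the subtle step, is the appeal to Theorem \ref{l} when $\phi$ is not homogeneous. Writing $\phi=\sum_{k}\phi^{(k)}$ for the weight decomposition (well-defined since $L_1$ is $\mathbb{N}$-graded with finite-dimensional components and each $\phi(e_i)$ has finite support), the commuting identity $[\phi(x),y]+[\phi(y),x]=0$ separates by total weight, so each $\phi^{(k)}$ is itself commuting, and the corresponding $(\phi^{(k)})_f$ is a homogeneous biderivation of weight $k$. The weight-by-weight analysis in the proof of Theorem \ref{l} then forces $(\phi^{(k)})_f=0$ whenever $k\neq 0$ and $(\phi^{(0)})_f=f_\lambda$ for some $\lambda\in\mathbb{F}$, so that summing over $k$ gives $\phi_f=f_\lambda$ as used above. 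With this homogeneity reduction in hand, the remainder of the argument is the short chain of implications displayed in the previous paragraph.
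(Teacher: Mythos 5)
Your proof is correct and follows essentially the same route as the paper: pass from the commuting map $\phi$ to the biderivation $\phi_f(x,y)=[x,\phi(y)]$ via Lemma \ref{linear}, invoke Theorem \ref{l} to write $\phi_f=f_\lambda$, and conclude from $\mathrm{C}(L_1)=0$. Your additional weight-decomposition argument justifying the use of Theorem \ref{l} for a non-homogeneous $\phi$ is a welcome refinement that the paper's own proof glosses over.
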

\begin{proof}
The `if' direction is easy to verify. We now prove the `only if' direction.
Suppose that $\phi$ is linear commuting mapping of $L_1$. By Lemma \ref{linear}, $\phi$ defines a biderivation $\phi_{f}\in\mathrm{BDer}(L_1)$.
By Theorem \ref{l}, for all $x,y\in L_1$, $\phi_{f}(x,y)=[x,\phi(y)]=\lambda[x,y]$ for some $\lambda\in\mathbb{F}$.
Then $[x,\phi(y)-\lambda y]=0$. That means $\phi(y)-\lambda y\in \mathrm{C}(L_1)=0$. The proof is complete.
\end{proof}
From the result of the first cohomology of $L_1$  with coefficients in the adjoint module \cite{L1}, we get the derivations of $L_1$  by considering the inner derivations in the following lemma.

\begin{lem}\label{LL1}
$\mathrm{dim}\ \mathrm{Der}_{k}(L_1)=\left\{
                    \begin{array}{ll}
                      0, & \hbox{$k\leq-1$;} \\
                      1, & \hbox{$k\geq0$.}
                    \end{array}
                  \right.
$
In particular, in $\mathrm{Der}_{k}(L_1)$ for $k\geq0$, a basis is $\sum_{i\geq1,i\neq k}(i-k)e^{k+i}_i$.
\end{lem}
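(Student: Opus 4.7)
The plan is to bootstrap directly from Lemma 2.1, which identifies $\mathrm{Der}(L_1)$ with $\mathrm{ad}_{L_1}(\F e_0\ltimes L_1)$. Since $\F e_0\ltimes L_1=\bigoplus_{m\geq 0}\F e_m$ is $\mathbb{N}$-graded (with $e_0$ of weight $0$) and the bracket respects the grading, the adjoint representation is a morphism of graded Lie algebras. In particular $\mathrm{ad}\ e_m$ raises degree by exactly $m$, so for every $k\in\mathbb{Z}$,
\[
\mathrm{Der}_k(L_1)=\mathrm{ad}_{L_1}\bigl((\F e_0\ltimes L_1)_k\bigr).
\]

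From here the two cases are immediate. First, for $k\leq -1$ the graded component $(\F e_0\ltimes L_1)_k$ is zero, so $\mathrm{Der}_k(L_1)=0$. For $k\geq 0$, the component $(\F e_0\ltimes L_1)_k=\F e_k$ is one-dimensional, so $\mathrm{Der}_k(L_1)$ is spanned by $\mathrm{ad}\ e_k$. I would then check that this span is genuinely one-dimensional by verifying $\mathrm{ad}\ e_k\neq 0$: applying it to $e_{k+1}$ (which lies in $L_1$) gives $(k+1-k)e_{2k+1}=e_{2k+1}\neq 0$, so $\mathrm{ad}\ e_k$ is nonzero and the ad map is injective on $(\F e_0\ltimes L_1)_k$.

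Finally, to match the advertised basis I would compute directly using the bracket of $\F e_0\ltimes L_1$: for $i\geq 1$,
\[
(\mathrm{ad}\ e_k)(e_i)=[e_k,e_i]=(i-k)\,e_{k+i}.
\]
Translating this into the notation $e_i^{k+i}\colon e_i\mapsto e_{k+i}$, we obtain
\[
\mathrm{ad}\ e_k=\sum_{i\geq 1}(i-k)\,e_i^{k+i}=\sum_{\substack{i\geq 1\\ i\neq k}}(i-k)\,e_i^{k+i},
\]
where the $i=k$ term is dropped because its coefficient vanishes. This yields exactly the basis stated.

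There is no real obstacle here; the entire proof is a bookkeeping exercise once Lemma 2.1 is in hand. The only thing to be slightly careful about is the $k=0$ case, where the relevant generator is $e_0$ (not in $L_1$ itself) acting by $[e_0,e_i]=i\,e_i$, so that $\mathrm{ad}\ e_0=\sum_{i\geq 1}i\,e_i^i$, matching $\sum_{i\geq 1,\,i\neq 0}(i-0)e_i^{0+i}$ under the stated convention.
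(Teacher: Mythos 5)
Your proof is correct and follows essentially the same route as the paper, which obtains the lemma by combining the inner derivations with the known description of $\mathrm{Der}(L_1)$ as $\mathrm{ad}_{L_1}(\F e_0\ltimes L_1)$ (Lemma \ref{L'}, equivalently the $H^1$ computation the paper cites); the paper gives no further argument, and your weight-by-weight bookkeeping, including the check that $\mathrm{ad}\ e_k\neq 0$ and the vanishing for $k\leq -1$, fills in exactly the intended details.
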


Here we characterize local and 2-local derivations of $L_1$ by the result of derivations.

\begin{thm}
$\mathrm{LDer}(L_1)=\mathrm{BLDer}(L_1)=\mathrm{Der}(L_1).$
\end{thm}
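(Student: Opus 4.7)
The plan is to imitate the proof of the analogous theorem for $\mathfrak{m}_0$ by using Lemma \ref{ld} to decompose a local derivation by weight, and then arguing weight by weight, invoking the very explicit description of $\mathrm{Der}_k(L_1)$ from Lemma \ref{LL1}. Since $\mathrm{BLDer}(L_1)\subseteq\mathrm{LDer}(L_1)$ is automatic (every 2-local derivation is a local derivation) and $\mathrm{Der}(L_1)\subseteq\mathrm{BLDer}(L_1)$ is trivial, the whole statement reduces to proving $\mathrm{LDer}(L_1)\subseteq\mathrm{Der}(L_1)$. By Lemma \ref{ld}, it is enough to show $\mathrm{LDer}_k(L_1)\subseteq\mathrm{Der}(L_1)$ for every $k\in\Z$.

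The case $k\leq-1$ is immediate: by Lemma \ref{LL1}, $\mathrm{Der}_k(L_1)=0$, so any $\Delta\in\mathrm{LDer}_k(L_1)$ satisfies $\Delta(e_i)=D_{e_i;k}(e_i)=0$ for every $i$, forcing $\Delta=0$.

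For $k\geq0$, Lemma \ref{LL1} says every element of $\mathrm{Der}_k(L_1)$ is a scalar multiple of $D_k:=\sum_{i\geq1,\,i\neq k}(i-k)e^{k+i}_i$. So for $\Delta\in\mathrm{LDer}_k(L_1)$ and each $i\geq1$, there is a scalar $c_i\in\F$ with $\Delta(e_i)=c_i(i-k)e_{i+k}$ when $i\neq k$, while $\Delta(e_k)=0$ (whenever $k\geq1$), since every homogeneous derivation of weight $k$ kills $e_k$. The remaining task is to prove that all the scalars $c_i$, $i\neq k$, coincide. Pick distinct $i,j\geq1$ with $i,j\neq k$. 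By the local derivation property applied to $e_i+e_j$, there exists $c\in\F$ with
\[
\Delta(e_i+e_j)=c(i-k)e_{i+k}+c(j-k)e_{j+k}.
\]
On the other hand, linearity of $\Delta$ gives
\[
\Delta(e_i+e_j)=c_i(i-k)e_{i+k}+c_j(j-k)e_{j+k}.
\]
Since $i-k$ and $j-k$ are nonzero, comparing coefficients yields $c_i=c_j=c$, so the scalars are all equal to a common value $c$. Hence $\Delta=cD_k\in\mathrm{Der}_k(L_1)$.

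There is no real obstacle; the one-dimensionality of each homogeneous piece $\mathrm{Der}_k(L_1)$ (in contrast with the two-dimensional pieces for $\mathfrak{m}_0$) actually makes the bookkeeping cleaner than in the $\mathfrak{m}_0$ proof, and the trick of testing on $e_i+e_j$ is exactly the tool used there. The only point worth double-checking is the boundary behavior at $i=k$, which is handled by noting that $D_k$ (and therefore every derivation in $\mathrm{Der}_k(L_1)$) automatically vanishes on $e_k$, so the value $\Delta(e_k)=0$ is forced and does not require comparison with the other $c_i$.
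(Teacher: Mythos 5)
Your proof is correct and follows essentially the same route as the paper: reduce to weight-homogeneous local derivations via Lemma \ref{ld}, use the one-dimensionality of $\mathrm{Der}_k(L_1)$ from Lemma \ref{LL1}, and compare $\Delta(e_i+e_j)$ computed by linearity against the local-derivation property to force all scalars to coincide. Your explicit remarks on the cases $k\leq-1$ and $\Delta(e_k)=0$ are points the paper leaves implicit, but the argument is the same.
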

\begin{proof}
By Lemmas \ref{ld} and \ref{LL1}, it is sufficient to prove
$\mathrm{LDer}_{k}(L_1)\subseteq\mathrm{Der}(L_1)$ for $k\geq0$.
Suppose that $\Delta_k\in\mathrm{Der}_k(L_1)$ is a local derivation. Fix a $i_0: i_{0}\geq 1$ and $i_{0}\neq k$. For $\Delta_k(e_{i_{0}})$, by the definition of local derivations, there exists $a_{k;i_0}\in\F$, such that
$$\Delta_k(e_{i_{0}})=a_{k;i_0}\sum_{i\geq1,i\neq k}((i-k)e^{k+i}_i)(e_{i_{0}})=a_{k;i_0}(i_0-k)e_{k+i_0}.$$
Similarly, for any $j: j\geq 1$ and  $j\neq k, i_0$, there exist $a_{k;j}\in\F$ such that
$$\Delta_k(e_j)=a_{k;j}\sum_{i\geq1,i\neq k}((i-k)e^{k+i}_i)(e_j)=a_{k;j}(j-k)e_{k+j}.$$
Moreover,
\begin{equation}\label{x}
\Delta_k(e_{i_{0}}+e_j)=\Delta_k(e_{i_{0}})+\Delta_k(e_j)=a_{k;i_0}(i_0-k)e_{k+i_0}+a_{k;j}(j-k)e_{k+j}.
\end{equation}
For $\Delta_k(e_{i_{0}}+e_j)$, by definition, there exist $a_{k;i_0,j}\in\F$ such that
\begin{equation}\label{y}
 \Delta_k(e_{i_{0}}+e_j)=a_{k;i_0,j}((i_0-k)e_{k+i_0}+(j-k)e_{k+j}).
\end{equation}
Comparing Eqs (\ref{x}) with (\ref{y}), we have $a_{k;j}=a_{k;i_0}$ for any $j\geq1$ and $j\neq k$.
So $\Delta_k=a_{k;i_0}\sum_{i\geq1,i\neq k}(i-k)e^{k+i}_i\in\mathrm{Der}_k(L_1).$
\end{proof}
\begin{rem}
 The 2-local derivations of  $L_1$ were also studied in \cite{l1}. In particular,
 authors showed that every 2-local derivation (not necessarily linear) is derivation.
\end{rem}

\section{$\mathbb{N}$-graded Lie algebra  $\mathfrak{m}_2$}


 From the result of the first cohomology of $\mathfrak{m}_2$  with coefficients in the adjoint module \cite[Theorem 2]{m2}, we get the derivations of $\mathfrak{m}_2$  by considering the inner derivations in the following lemma.
\begin{lem}\label{M21}
$\mathrm{dim}\ \mathrm{Der}_{k}(\mathfrak{m}_2)=\left\{
                    \begin{array}{ll}
                      0, & \hbox{$k\leq-1$;} \\
                      1, & \hbox{$k=0,1$;} \\
                      2, & \hbox{$k\geq2$.}
                    \end{array}
                  \right.
$
In particular,

(1) in $\mathrm{Der}_{0}(\mathfrak{m}_2)$, a basis is $\sum_{i\geq1}ie^{i}_i$;

(2) in $\mathrm{Der}_{1}(\mathfrak{m}_2)$, a basis is $\sum_{i\geq2}e^{i+1}_i$;

(3) in $\mathrm{Der}_{2}(\mathfrak{m}_2)$, a basis is $\sum_{i\geq2}e^{i+2}_i$, $e^{3}_{1}-\sum_{i\geq3}e^{i+2}_{i}$;

(4) in $\mathrm{Der}_{k}(\mathfrak{m}_2)$ for $k\geq3$, a basis is $e^{k+1}_1+e^{k+2}_2$,
$-\frac{1}{2}e^{k+1}_1+\frac{1}{2}e^{k+2}_2+\sum_{i\geq3}e^{i+k}_i$.
\end{lem}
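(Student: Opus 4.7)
The plan is to use the relation $\mathrm{Der}(\mathfrak{m}_2) = \mathrm{ad}\,\mathfrak{m}_2 \oplus H$, where $H$ is any graded complement realizing $\mathrm{H}^{1}(\mathfrak{m}_2,\mathfrak{m}_2) = \mathrm{Der}(\mathfrak{m}_2)/\mathrm{ad}\,\mathfrak{m}_2$, so that on the level of weight components
\[
\dim \mathrm{Der}_{k}(\mathfrak{m}_2) \;=\; \dim (\mathrm{ad}\,\mathfrak{m}_2)_{k} + \dim \mathrm{H}^{1}_{k}(\mathfrak{m}_2,\mathfrak{m}_2).
\]
The second summand is read off from the cited \cite[Theorem 2]{m2}; the work is to compute the inner part precisely and then splice in explicit outer representatives.

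First I would list the inner derivations. Because $\|e_i\|=i$ and the only nonzero brackets are $[e_1,e_i]=e_{i+1}$ for $i\geq 2$ and $[e_2,e_j]=e_{j+2}$ for $j\geq 3$, in particular $[e_i,e_j]=0$ for all $i,j\geq 3$, one gets $\mathrm{ad}\,e_1 = \sum_{i\geq 2} e^{i+1}_{i}$ (weight $1$), $\mathrm{ad}\,e_2 = -e^{3}_{1} + \sum_{j\geq 3} e^{j+2}_{j}$ (weight $2$), and $\mathrm{ad}\,e_k = -e^{k+1}_{1}-e^{k+2}_{2}$ for $k\geq 3$. In particular $(\mathrm{ad}\,\mathfrak{m}_2)_k$ is trivial for $k\leq 0$ and one-dimensional for each $k\geq 1$. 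Comparing with the dimension of $\mathrm{H}^{1}_{k}$ (which is $1$ for $k\geq 0$ and $0$ for $k<0$) yields the stated dimensions $0,1,1,2,2,\ldots$ for $k=-1,0,1,2,3,\ldots$.

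It remains to exhibit and verify the listed basis vectors. For each $k\geq 1$, one basis element can be chosen to be an inner derivation (a scalar of $\mathrm{ad}\,e_k$, which after rescaling matches the forms $\sum_{i\geq 2}e^{i+1}_i$ in (2), $\sum_{i\geq 2}e^{i+2}_i$ paired with $e^3_1-\sum_{i\geq 3}e^{i+2}_i$ in (3), and $e^{k+1}_1+e^{k+2}_2$ in (4)); the remaining basis element is an outer representative from \cite{m2}. For the weight-zero case (1), $(\mathrm{ad}\,\mathfrak{m}_2)_0=0$, so the unique class $\sum_{i\geq 1} i e^{i}_{i}$ must be verified directly: it acts as $e_i\mapsto i e_i$, and the derivation property reduces to $i+j=(i)+(j)$ on each nonzero bracket $[e_1,e_i]=e_{i+1}$ and $[e_2,e_j]=e_{j+2}$, which is automatic. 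A completely self-contained alternative to invoking \cite{m2} would be to write a weight-$k$ derivation as $D(e_i)=a_i e_{i+k}$ and extract recurrences in $a_i$ from $D[e_1,e_i]=[D(e_1),e_i]+[e_1,D(e_i)]$ and $D[e_2,e_j]=[D(e_2),e_j]+[e_2,D(e_j)]$.

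The main obstacle, in either route, is the exceptional behavior at low weight: when $k\in\{0,1,2\}$ the index $i+k$ or $1+k$ or $2+k$ can land on $1$ or $2$, so the brackets $[e_{1+k},e_i]$ and $[e_{2+k},e_j]$ fail to vanish and the recurrence changes form. Handling these $k=0,1,2$ cases individually is what produces the nonuniform bases in (1)–(4), while the generic pattern stabilizes for $k\geq 3$ into the two-parameter family of item (4). Once these low-weight cases are disentangled and matched against $\mathrm{ad}\,e_k$, the stated bases follow.
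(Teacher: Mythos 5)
Your overall strategy --- computing $(\mathrm{ad}\,\mathfrak{m}_2)_k$ explicitly and adding the graded dimensions of $\mathrm{H}^1(\mathfrak{m}_2,\mathfrak{m}_2)$ from \cite[Theorem 2]{m2} --- is exactly what the paper does (it offers no further proof), and your computation of the inner part is correct: $\mathrm{ad}\,e_1=\sum_{i\geq2}e^{i+1}_i$, $\mathrm{ad}\,e_2=-e^3_1+\sum_{j\geq3}e^{j+2}_j$, $\mathrm{ad}\,e_k=-e^{k+1}_1-e^{k+2}_2$ for $k\geq3$, so $\dim(\mathrm{ad}\,\mathfrak{m}_2)_k$ is $1$ for $k\geq1$ and $0$ otherwise.

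However, there is a genuine error in the other summand. You assert that $\dim\mathrm{H}^1_k(\mathfrak{m}_2,\mathfrak{m}_2)=1$ for all $k\geq0$; with your (correct) inner dimensions this gives $\dim\mathrm{Der}_1(\mathfrak{m}_2)=1+1=2$, which contradicts both the lemma and the sequence $0,1,1,2,2,\ldots$ you then write down. In fact $\mathrm{H}^1$ vanishes in weight $1$: running the recurrence you propose at $k=1$, the relation $D[e_1,e_i]=[De_1,e_i]+[e_1,De_i]$ with $D(e_i)=a_ie_{i+1}$ gives $a_{i+1}=a_1+a_i$ for $i\geq3$, while $D[e_2,e_j]=[De_2,e_j]+[e_2,De_j]$ gives $a_{j+2}=a_j$ for $j\geq3$ (since $[e_3,e_j]=0$); combining these forces $a_1=0$ and $a_i=a_2$ for all $i\geq2$, i.e.\ $D=a_2\,\mathrm{ad}\,e_1$. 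So the unique basis vector in item (2) is inner and there is no ``remaining outer representative'' at $k=1$, contrary to your description. The correct graded dimensions of $\mathrm{H}^1$ are $1$ for $k=0$ and $k\geq2$, and $0$ for $k=1$ and $k\leq-1$; with that correction your argument closes. Everything else (the weight-$0$ grading derivation, the identification of the inner basis vectors in (2)--(4), and the proposed self-contained recurrence route) is sound.
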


In order to describe the derivation algebra of $\mathfrak{m}_2$, we denote by $$\mathfrak{\widetilde{m}}_2=\mathfrak{m}_2\oplus\mathrm{span}\{x_{0},x_i,i\geq2\}$$
 the Lie algebra with brackets:
$$[e_1,e_i]=e_{i+1},\ [e_2,e_j]=e_{j+2},\ [x_0,x_i]=ix_i,\ [x_2,x_j]=\frac{1}{2}e_{j+2},\
[x_0,e_{i-1}]=(i-1)e_{i-1},$$
$$[x_2,e_i]=e_{i+2},\ [x_j,e_1]=-\frac{1}{2}e_{j+1},\ [x_j,e_2]=\frac{1}{2}e_{j+2},\ [x_i,e_j]=e_{i+j},$$
where  $i\geq2$, $j\geq3$.
Obviously, $\mathfrak{\widetilde{m}}_2=\bigoplus^{\infty}_{i=0}(\mathfrak{\widetilde{m}}_2)_i$ is a $\mathbb{Z}$-graded Lie algebra with the graded component
$(\mathfrak{\widetilde{m}}_2)_i=\left\{
                                  \begin{array}{ll}
                                    \F x_{0}, & \hbox{$i=0$;} \\
                                     \F e_{1}, & \hbox{$i=1$;} \\
                                    \mathrm{span}\{e_{i},x_{i}\}, & \hbox{$i\geq 2$.}
                                  \end{array}
                                \right.$
By Lemma \ref{m2}, $\mathrm{Der}(\mathfrak{m}_2)\cong\mathfrak{\widetilde{m}}_2$.

\begin{thm}\label{m2}
$\mathrm{dim}\ \mathrm{BDer}_{k}(\mathfrak{m}_2)=\left\{
                    \begin{array}{ll}
                      1, & \hbox{$k\geq0$;} \\
                     0, & \hbox{$k\leq-1$.}
                    \end{array}
                  \right.
$
In particular, for $k\geq0$, $\mathrm{BDer}_{k}(\mathfrak{m}_2)$ is spanned by $\sum_{i\geq2}e^{1,i}_{k+i+1}+\sum_{i\geq3}e^{2,i}_{k+i+2}$.
\end{thm}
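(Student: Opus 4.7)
The plan is to mirror the strategy of Theorem \ref{M01}: invoke Lemma \ref{C} to lift any $f\in\mathrm{BDer}_k(\mathfrak{m}_2)$ to a weight-$k$ linear map $\varphi_f:\mathfrak{m}_2\to\widetilde{\mathfrak{m}}_2$ satisfying
\[
f(e_i,e_j)=[\varphi_f(e_i),e_j]=-[\varphi_f(e_j),e_i],\qquad i,j\geq 1,
\]
and then pin $\varphi_f$ down by exploiting (a) the skew-symmetry relation $[\varphi_f(e_i),e_i]=0$, and (b) the cross-compatibility $[\varphi_f(e_i),e_j]+[\varphi_f(e_j),e_i]=0$, evaluated at judicious pairs $(i,j)$. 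Using the $\mathbb{Z}$-grading of $\widetilde{\mathfrak{m}}_2$ recorded just before the theorem, one writes an ansatz for $\varphi_f(e_i)$ in the graded piece $(\widetilde{\mathfrak{m}}_2)_{i+k}$: at weights $\leq 0$ this piece is at most one-dimensional (spanned by $x_0$ or $e_1$), while at weights $\geq 2$ it is $\mathrm{span}\{e_{i+k},x_{i+k}\}$. The combinatorics of the brackets in $\widetilde{\mathfrak{m}}_2$ then reduce the problem to linear relations among finitely many coefficients for each $i$.

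I would split the argument into the cases $k\leq -3$, $k=-2$, $k=-1$, $k=0$, $k=1$, and $k\geq 2$, handling each as follows. For $k\leq -3$, enough of the initial $\varphi_f(e_i)$'s are forced to be zero by the grading that the condition $[\varphi_f(e_i),e_i]=0$, applied to the first few nonzero indices and using the non-degenerate actions of $x_0$, $e_1$, $x_j$ on $\mathfrak{m}_2$, already kills everything. The cases $k=-2$ and $k=-1$ are the delicate ones: here the images $\varphi_f(e_i)$ for small $i$ land in the low-degree pieces $\mathbb{F}x_0$ and $\mathbb{F}e_1$, and one must now use the extra bracket $[e_2,e_j]=e_{j+2}$ (absent in $\mathfrak{m}_0$) together with the cross-relation at pairs $(1,j)$ and $(2,j)$ to obtain an \emph{additional} vanishing constraint, which is precisely what collapses $\mathrm{BDer}_{-1}(\mathfrak{m}_2)$ to zero (in contrast to $\mathrm{BDer}_{-1}(\mathfrak{m}_0)$). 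For $k\geq 0$, the ansatz $\varphi_f(e_i)=\lambda_i e_{i+k}+\mu_i x_{i+k}$ (with the obvious truncations at $i=1,2$) is fed into $[\varphi_f(e_i),e_i]=0$ to kill one coefficient per $i$, and into $[\varphi_f(e_1),e_j]=-[\varphi_f(e_j),e_1]$ and $[\varphi_f(e_2),e_j]=-[\varphi_f(e_j),e_2]$ to force the remaining coefficients to lie in a one-parameter family; reading off the resulting $f$ gives exactly $\sum_{i\geq 2}e^{1,i}_{k+i+1}+\sum_{i\geq 3}e^{2,i}_{k+i+2}$.

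The main obstacle I anticipate is bookkeeping in the low-weight cases $k=-1,0,1$. The bracket table of $\widetilde{\mathfrak{m}}_2$ contains the coefficients $\pm\tfrac{1}{2}$ coming from $[x_j,e_1]$ and $[x_j,e_2]$, while the action $[x_i,e_j]=e_{i+j}$ on the rest of $\mathfrak{m}_2$ has coefficient $1$; this asymmetry means the two relations obtained from testing against $e_1$ and $e_2$ give genuinely different linear equations, and it is this mismatch that simultaneously (i) rules out any weight-$(-1)$ biderivation and (ii) forces the particular ratio $1:1$ between the $e^{1,i}$-sum and the $e^{2,i}$-sum in the surviving basis element for $k\geq 0$. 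Once the coefficients are tracked correctly the claimed dimension count and explicit spanning element drop out immediately; the recipe is routine, but one has to resist collapsing the two cases $i\in\{1,2\}$ with the generic $i\geq 3$ too early.
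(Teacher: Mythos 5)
Your proposal follows essentially the same route as the paper's proof: lift $f$ to $\varphi_f:\mathfrak{m}_2\to\widetilde{\mathfrak{m}}_2$ via Lemma \ref{C}, write a graded ansatz for $\varphi_f(e_i)$, and pin down the coefficients case by case in $k$ using the diagonal relation $[\varphi_f(e_i),e_i]=0$ and the cross relations tested against $e_1$ and $e_2$; carried out, this yields exactly the claimed dimensions and spanning element. One small correction to your diagnosis of the $k=-1$ case: in the paper the collapse does not come from cross relations at pairs $(2,j)$, but already from the diagonal relation at $i=1$, namely $[\varphi_f(e_1),e_1]=[\alpha x_0,e_1]=\alpha e_1=0$ forces $\alpha=0$ --- the real contrast with $\mathfrak{m}_0$ is that $\mathrm{Der}_0(\mathfrak{m}_2)=\mathbb{F}x_0$ is one-dimensional and its generator acts nontrivially on $e_1$, whereas $\mathrm{Der}_0(\mathfrak{m}_0)$ contains the extra element $x_{02}$ annihilating $e_1$, which is what survives to span $\mathrm{BDer}_{-1}(\mathfrak{m}_0)$. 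Your method of testing all relations would still uncover this, so the plan is sound; just be aware the decisive constraint sits in a different place than you anticipated.
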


\begin{proof}
By Lemma \ref{C},
for $f\in \mathrm{BDer}_{k}(\mathfrak{m}_2)$, $k\in\Z$, there exists a  linear mapping $\varphi_f:\ \mathfrak{m}_2\rightarrow \mathfrak{\widetilde{m}}_2$ such that, for $i,j\geq1$,
\begin{equation}\label{mb2}
f(e_i,e_j)=[\varphi_f(e_i),e_j]=-[\varphi_f(e_j),e_i],
\end{equation}
where $\|f\|=\|\varphi_f\|=k$. Now we determine $\varphi_f$ in different weight $k$.

\begin{flushleft}
$\mathbf{Case\ 1.}\ k\leq-2.$
\end{flushleft}
In this case, $\varphi_f(e_1)=\cdots=\varphi_f(e_{-k-1})=0$. Set $\varphi_f(e_{-k})=\alpha x_{0}$, $\varphi_f(e_{-k+1})=\beta e_1$ and $\varphi_f(e_{-k+i})=\lambda_ie_i+\mu_ix_i$ for $i\geq 2$.
Taking $j=1$ in Eq. (\ref{mb2}), we get
$[\varphi_f(e_{i}),e_1]=0$.
Thus, we have
\begin{eqnarray*}
&&[\varphi_f(e_{-k}),e_1]=[\alpha x_{0},e_1]=\alpha e_1=0, \\
&&[\varphi_f(e_{-k+2}),e_1]=[\lambda_2e_2+\mu_2x_2,e_1]=-\lambda_2e_3=0,\\
&&[\varphi_f(e_{-k+i}),e_1]=[\lambda_ie_i+\mu_ix_i,e_1]=-(\lambda_i+\frac{1}{2}\mu_i)e_{i+1}=0,
\end{eqnarray*}
where $i\geq 3$. So $\alpha=\lambda_2=\lambda_i+\frac{1}{2}\mu_i=0$ for $i\geq 3$. Taking $i=j$ in Eq. (\ref{mb2}), we get
$[\varphi_f(e_{i}),e_i]=0$. Thus, we have
\begin{eqnarray*}
  &&[\varphi_f(e_{-k+1}),e_{-k+1}]=[\beta e_1,e_{-k+1}]=\beta e_{-k+2}=0, \\
  &&[\varphi_f(e_{-k+i}),e_{-k+i}]=[\lambda_ie_i+\mu_ix_i,e_{-k+i}]=\mu_ie_{-k+2i}=0,
\end{eqnarray*}
where $i\geq 2$. So $\beta=\mu_i=0$ for $i\geq 2$. Thus $f=\varphi_f=0$.

\begin{flushleft}
$\mathbf{Case\ 2.}\ k=-1.$
\end{flushleft}
In this case, we set $\varphi_f(e_1)=\alpha x_{0}$, $\varphi_f(e_2)=\beta e_1$ and $\varphi_f(e_i)=\lambda_{i-1}e_{i-1}+\mu_{i-1}x_{i-1}$ for $i\geq 3$.
Taking $i=j$ in Eq. (\ref{mb2}), we get
$[\varphi_f(e_{i}),e_i]=0$. Thus, we have
\begin{eqnarray*}
 &&[\varphi_f(e_{1}),e_{1}]=[\alpha x_{0},e_{1}]=\alpha e_{1}=0, \\
 &&[\varphi_f(e_{2}),e_{2}]=[\beta e_{1},e_{2}]=\beta e_3=0,\\
&&[\varphi_f(e_{3}),e_{3}]=[\lambda_{2}e_{2}+\mu_{2}x_{2},e_{3}]=(\lambda_2+\mu_{2})e_{5}=0,\\
&&[\varphi_f(e_{i}),e_{i}]=[\lambda_{i-1}e_{i-1}+\mu_{i-1}x_{i-1},e_{i}]=\mu_{i-1}e_{2i-1}=0,
\end{eqnarray*}
where $i\geq4$. So $\alpha=\beta=\lambda_2+\mu_{2}=\mu_{i}=0$ for $i\geq 3$.
Taking $j=1$ in Eq. (\ref{mb2}), we get
$[\varphi_f(e_{i}),e_1]=-[\varphi_f(e_{1}),e_i]=0$.
Thus, we have
\begin{eqnarray*}
 &&[\varphi_f(e_3),e_1]=[\lambda_2e_2+\mu_2x_2,e_1]=-\lambda_2e_3=0, \\
  &&[\varphi_f(e_i),e_1]=[\lambda_{i-1}e_{i-1},e_1]=-\lambda_{i-1}e_i=0,
\end{eqnarray*}
where $i\geq 4$. So $\lambda_i=0$ for $i\geq 2$. Thus $f=\varphi_f=0$.

\begin{flushleft}
$\mathbf{Case\ 3.}\ k=0.$
\end{flushleft}
In this case, we set $\varphi_f(e_1)=\alpha e_{1}$ and $\varphi_f(e_i)=\lambda_{i}e_{i}+\mu_{i}x_{i}$ for $i\geq 2$.
Taking $i=j$ in Eq. (\ref{mb2}), we get
$[\varphi_f(e_{i}),e_i]=0$. Thus, we have
\begin{equation*}
[\varphi_f(e_{i}),e_{i}]=[\lambda_ie_i+\mu_ix_{i},e_{i}]=\mu_i e_{2i}=0,
\end{equation*}
where $i\geq2$. So $\mu_{i}=0$ for $i\geq 2$.
Taking $i=1$ in Eq. (\ref{mb2}), we get
$[\varphi_f(e_{1}),e_j]=-[\varphi_f(e_{j}),e_1]$.
Thus, we have
\begin{equation*}
[\varphi_f(e_{1}),e_j]=-[\varphi_f(e_{j}),e_1]=\alpha e_{j+1}=\lambda_je_{j+1},
\end{equation*}
where $j\geq 2$. So $\lambda_i=\alpha$ for $i\geq 2$. Thus,
\begin{eqnarray*}
&&f(e_1,e_i)=[\varphi_f(e_1),e_i]=[\alpha e_{1},e_i]=\alpha e_{i+1},\ i\geq2, \\
&&f(e_2,e_i)=[\varphi_f(e_2),e_i]=[\alpha e_2,e_i]=\alpha e_{i+2} ,\ i\geq 3,\\
&&f(e_i,e_j)=[\varphi_f(e_i),e_j]=[\alpha e_{i},e_j]=0  ,\ j>i\geq 3.
\end{eqnarray*}
That is,
 $f=\alpha\left(\sum_{i\geq2}e^{1,i}_{i+1}+\sum_{i\geq3}e^{2,i}_{i+2}\right)$.

\begin{flushleft}
$\mathbf{Case\ 4.}\ k=1.$
\end{flushleft}
In this case, we set $\varphi_f(e_i)=\lambda_{i}e_{i+1}+\mu_{i}x_{i+1}$ for $i\geq 1$.
Taking $i=j$ in Eq. (\ref{mb2}), we get
$[\varphi_f(e_{i}),e_i]=0$. Thus, we have
\begin{eqnarray*}
 &&[\varphi_f(e_{1}),e_{1}]=[\lambda_{1}e_{2}+\mu_{1}x_{2},e_{1}]=-\lambda_1 e_{3}=0, \\
 &&[\varphi_f(e_{2}),e_{2}]=[\lambda_{2}e_{3}+\mu_{2}x_{3},e_{2}]=(\frac{1}{2}\mu_2-\lambda_2) e_5=0,\\
&&[\varphi_f(e_{i}),e_{i}]=[\lambda_{i}e_{i+1}+\mu_{i}x_{i+1},e_{i}]=\mu_{i}e_{2i+1}=0,
\end{eqnarray*}
where $i\geq 3$. So $\lambda_2=\frac{1}{2}\mu_2$, $\lambda_1=\mu_i=0$ for $i\geq 3$. Taking $j=1$ in
 Eq. (\ref{mb2}), we get $[\varphi_f(e_i),e_1]=-[\varphi_f(e_1),e_i]$. Thus, we have
\begin{equation*}
[\varphi_f(e_i),e_1]=-[\varphi_f(e_1),e_i]=-(\lambda_i+\frac{1}{2}\mu_i)e_{i+2}=-\mu_1e_{i+2},
\end{equation*}
where $i\geq 2$. So $\lambda_i=\mu_1-\frac{1}{2}\mu_i$ for $i\geq2$.
Thus,
\begin{eqnarray*}
&&f(e_1,e_i)=[\varphi_f(e_1),e_i]=[\mu_1 x_{2},e_i]=\mu_1 e_{i+2},\ i\geq2, \\
&&f(e_2,e_i)=[\varphi_f(e_2),e_i]=[\frac{1}{2}\mu_1 e_3+\mu_1x_3,e_i]=\mu_1 e_{i+3} ,\ i\geq 3,\\
&&f(e_i,e_j)=[\varphi_f(e_i),e_j]=[\mu_1 e_{i+1},e_j]=0  ,\ j>i\geq 3.
\end{eqnarray*}
That is,
 $f=\mu_1\left(\sum_{i\geq2}e^{1,i}_{i+2}+\sum_{i\geq3}e^{2,i}_{i+3}\right)$.

\begin{flushleft}
$\mathbf{Case\ 5.}\ k\geq2.$
\end{flushleft}
In this case, we set $\varphi_f(e_i)=\lambda_{i}e_{i+k}+\mu_{i}x_{i+k}$ for $i\geq 1$.
Taking $i=j$ in Eq. (\ref{mb2}), we get
$[\varphi_f(e_{i}),e_i]=0$. Thus we have
\begin{eqnarray*}
 &&[\varphi_f(e_{1}),e_{1}]=[\lambda_{1}e_{k+1}+\mu_{1}x_{k+1},e_{1}]=-(\lambda_1+\frac{1}{2}\mu_1) e_{k+2}=0, \\
 &&[\varphi_f(e_{2}),e_{2}]=[\lambda_{2}e_{k+2}+\mu_{2}x_{k+2},e_{2}]=(\frac{1}{2}\mu_2-\lambda_2) e_{k+4}=0,\\
&&[\varphi_f(e_{i}),e_{i}]=[\lambda_{i}e_{k+i}+\mu_{i}x_{k+i},e_{i}]=\mu_{i}e_{2i+k}=0,
\end{eqnarray*}
where $i\geq 3$. So $\lambda_1=-\frac{1}{2}\mu_1$, $\lambda_2=\frac{1}{2}\mu_2$, $\mu_i=0$ for $i\geq 3$. Taking $j=1$ in
 Eq. (\ref{mb2}),
 we get $[\varphi_f(e_i),e_1]=-[\varphi_f(e_1),e_i]$. Thus, we have
\begin{eqnarray*}
&& [\varphi_f(e_2),e_1]=-[\varphi_f(e_1),e_2]=-\mu_2e_{k+3}=-\mu_1e_{k+3},\\
  && [\varphi_f(e_i),e_1]=-[\varphi_f(e_1),e_i]=-\lambda_ie_{k+i+1}=-\mu_1e_{k+i+1},
\end{eqnarray*}
where $i\geq 3$. So $\mu_1=\mu_2=\lambda_i$ for $i\geq 3$.
Thus,
\begin{eqnarray*}
&&f(e_1,e_i)=[\varphi_f(e_1),e_i]=[-\frac{1}{2}\mu_1e_{k+1}+\mu_1 x_{k+1},e_i]=\mu_1 e_{k+i+1},\ i\geq2, \\
&&f(e_2,e_i)=[\varphi_f(e_2),e_i]=[\frac{1}{2}\mu_1 e_{k+2}+\mu_1x_{k+2},e_i]=\mu_1 e_{k+i+2} ,\ i\geq 3,\\
&&f(e_i,e_j)=[\varphi_f(e_i),e_j]=[\mu_1 e_{k+i},e_j]=0  ,\ j>i\geq 3.
\end{eqnarray*}
That is,
 $f=\mu_1\left(\sum_{i\geq2}e^{1,i}_{k+i+1}+\sum_{i\geq3}e^{2,i}_{k+i+2}\right)$.

In conclusion,
$$f=\left\{
                    \begin{array}{ll}
                      \lambda_k\left(\sum_{i\geq2}e^{1,i}_{k+i+1}+\sum_{i\geq3}e^{2,i}_{k+i+2}\right), & \hbox{$\lambda_k\in\mathbb{F},\ k\geq0$;} \\
                      0, & \hbox{$k\leq-1$.}
                    \end{array}
                  \right.
$$
The proof is complete.
\end{proof}

\begin{cor}
$\mathrm{BDer}_{0}(\mathfrak{m}_2)=\mathrm{IBDer}(\mathfrak{m}_2)$.
\end{cor}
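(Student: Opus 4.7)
The plan is to prove the corollary by a direct comparison argument, leveraging Theorem~\ref{m2} which already pins down $\mathrm{BDer}_{0}(\mathfrak{m}_2)$ as a one-dimensional space. First I would observe that the inclusion $\mathrm{IBDer}(\mathfrak{m}_2) \subseteq \mathrm{BDer}_{0}(\mathfrak{m}_2)$ is essentially automatic: for any $\lambda \in \mathbb{F}$, the inner biderivation $f_{\lambda}(x,y) = \lambda[x,y]$ sends $(\mathfrak{m}_2)_i \times (\mathfrak{m}_2)_j$ into $(\mathfrak{m}_2)_{i+j}$ because the bracket on the graded Lie algebra $\mathfrak{m}_2$ is of weight $0$, so $\|f_{\lambda}\| = 0$.

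For the reverse inclusion, I would use Theorem~\ref{m2} with $k=0$: the space $\mathrm{BDer}_{0}(\mathfrak{m}_2)$ is spanned by the single biderivation $g = \sum_{i\geq 2} e^{1,i}_{i+1} + \sum_{i\geq 3} e^{2,i}_{i+2}$. It then suffices to exhibit $g$ as an inner biderivation. I would evaluate $g$ on every unordered pair of basis vectors $(e_i, e_j)$ with $i < j$, getting
\[
g(e_1, e_i) = e_{i+1} \ (i\geq 2), \quad g(e_2, e_j) = e_{j+2} \ (j\geq 3), \quad g(e_i, e_j) = 0 \ (j > i \geq 3),
\]
and compare with the defining brackets of $\mathfrak{m}_2$: $[e_1,e_i] = e_{i+1}$ for $i\geq 2$, $[e_2,e_j] = e_{j+2}$ for $j \geq 3$, and all other brackets $[e_i,e_j]$ with $j > i \geq 3$ vanish. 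These formulas match exactly, so $g(x,y) = [x,y]$ on a basis, hence on all of $\mathfrak{m}_2 \times \mathfrak{m}_2$ by bilinearity and skew-symmetry. That is, $g = f_1 \in \mathrm{IBDer}(\mathfrak{m}_2)$, whence $\mathrm{BDer}_{0}(\mathfrak{m}_2) = \mathbb{F}g \subseteq \mathrm{IBDer}(\mathfrak{m}_2)$, completing the proof.

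There is essentially no obstacle here; the entire content has already been absorbed into Theorem~\ref{m2}, and the corollary is a one-line verification that the explicit generator of $\mathrm{BDer}_{0}(\mathfrak{m}_2)$ coincides with the Lie bracket itself. The only potential pitfall is bookkeeping with the indexing conventions of the $e^{i,j}_k$ symbols, which I would double-check by writing out the action on a couple of small cases such as $(e_1,e_2)$ and $(e_2,e_3)$ before asserting the general identification.
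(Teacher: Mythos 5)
Your proof is correct and is essentially the argument the paper intends: the corollary is left as an immediate consequence of Theorem~\ref{m2}, whose Case~3 already exhibits the weight-zero biderivation as $f(e_i,e_j)=[\alpha e_i,e_j]=\alpha[e_i,e_j]=f_\alpha(e_i,e_j)$, i.e.\ as an inner biderivation. Your two observations (inner biderivations have weight $0$ because the bracket does, and the explicit spanning element of $\mathrm{BDer}_0(\mathfrak{m}_2)$ coincides with the bracket on basis pairs) are exactly the required verification.
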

\begin{thm}
A linear mapping of $\mathfrak{m}_2$ is commuting if and only if it is a scalar of $\sum_{i\geq1}e^{i}_{i}$.
\end{thm}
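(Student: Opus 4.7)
I will follow the pattern of the $\mathfrak{m}_0$ proof. The ``if'' direction is immediate, since $[\phi(x),x]=\lambda[x,x]=0$ when $\phi=\lambda\sum_{i\geq 1}e^{i}_{i}$. For the converse, given a linear commuting map $\phi$ of $\mathfrak{m}_2$, I would first reduce to the case where $\phi$ is homogeneous of weight $k$, since the identity $[\phi(x),x]=0$ splits along $\mathbb{N}$-degrees of $x$. By Lemma \ref{linear}, such a $\phi$ produces a biderivation $\phi_{f}(x,y)=[x,\phi(y)]$ lying in $\mathrm{BDer}_{k}(\mathfrak{m}_2)$; Theorem \ref{m2} then characterizes $\phi_f$ completely: it is zero when $k\leq -1$ and equal to a scalar multiple $\lambda_k\bigl(\sum_{i\geq 2}e^{1,i}_{k+i+1}+\sum_{i\geq 3}e^{2,i}_{k+i+2}\bigr)$ when $k\geq 0$. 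The strategy from here is to test $\phi_{f}$ on carefully chosen basis pairs to pin down the scalars $a_i$ defined by $\phi(e_i)=a_i e_{i+k}$.

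When $k\leq -1$, $\phi_{f}=0$ forces $\phi(y)\in\mathrm{C}(\mathfrak{m}_2)=0$ by centerlessness, so $\phi=0$. When $k=0$, matching $\phi_{f}(e_1,e_i)=a_i e_{i+1}$ with the formula value $\lambda_0 e_{i+1}$ yields $a_i=\lambda_0$ for every $i\geq 2$, and comparing $\phi_{f}(e_2,e_1)=-a_1 e_3$ with $-\lambda_0 e_3$ gives $a_1=\lambda_0$ as well; this produces exactly the scalar mapping $\phi=\lambda_0\sum_{i\geq 1}e^{i}_{i}$.

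The main obstacle is the case $k\geq 1$, where the shortcut used for $\mathfrak{m}_0$, namely $[e_j,e_{k+1}]=0$ for $j\geq 2$, fails: in $\mathfrak{m}_2$ one has $[e_1,e_{k+1}]=e_{k+2}$, and for $k\geq 2$ also $[e_2,e_{k+1}]=e_{k+3}$, both nonzero. My plan is to turn this obstruction into a virtue by first testing $\phi_{f}$ at the diagonal pair $(e_1,e_1)$: the formula forces $\phi_{f}(e_1,e_1)=0$, while the direct evaluation gives $[e_1,a_1 e_{k+1}]=a_1 e_{k+2}\neq 0$ unless $a_1=0$; this lets me conclude $\phi(e_1)=0$. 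Then for any $j\geq 2$, the identity $\phi_{f}(e_j,e_1)=[e_j,\phi(e_1)]=0$ clashes with the formula value $-\lambda_k e_{k+j+1}$, so $\lambda_k=0$; hence $\phi_{f}=0$, and a final appeal to centerlessness gives $\phi=0$, completing the proof.
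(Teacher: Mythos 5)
Your proposal is correct and follows essentially the same route as the paper: reduce to homogeneous weight $k$, invoke Lemma \ref{linear} and Theorem \ref{m2} to identify $\phi_f$, and pin down the coefficients by evaluating on basis pairs, finishing with centerlessness. The only difference is in the case $k\geq 1$, where the paper splits into $k=1$ and $k\geq 2$ and exploits $[e_j,e_{k+1}]=0$ for suitable $j\geq 3$, whereas you use the commuting identity $[\phi(e_1),e_1]=0$ to get $a_1=0$ directly and thereby treat all $k\geq 1$ uniformly — a slightly cleaner but equivalent argument.
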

\begin{proof}
The `if' direction is easy to verify. We now prove the `only if' direction.
Suppose that $\phi$ is linear commuting mapping of weight $k$. That is to say that $\phi(e_i)=a_ie_{i+k}$ for $i\geq1$. By Lemma \ref{linear}, $\phi$ defines a biderivation $\phi_{f}\in\mathrm{BDer}_{k}(\mathfrak{m}_2)$.
If $k\leq-1$, then $\phi_{f}=0$ by Theorem \ref{m2}. So $[x,\phi(y)]=0$ for all $x,y\in\mathfrak{m}_2$. From $\phi(y)\in\mathrm{C}(\mathfrak{m}_2)=0$, we have $\phi=0$. If $k\geq0$, then
$\phi_{f}=\lambda(\sum_{i\geq2}e^{1,i}_{k+i+1}+\sum_{i\geq3}e^{2,i}_{k+i+2})$ for some $\lambda\in\mathbb{F}$.
\begin{flushleft}
$\mathbf{Case\ 1.}\ k\geq2.$
\end{flushleft}
From $\phi_{f}(e_j,e_1)=\lambda(\sum_{i\geq2}e^{1,i}_{k+i+1}+\sum_{i\geq3}e^{2,i}_{k+i+2})(e_j,e_1)$ for $j\geq3$, we have
$[e_j,a_1e_{k+1}]=-\lambda e_{1+j+k}$.
So $\lambda=0$. We have $\phi_{f}(x,y)=[x,\phi(y)]=0$ for all $x,y\in\mathfrak{m}_2$. So $\phi(y)\in \mathrm{C}(\mathfrak{m}_2)=0$. Then $\phi=0$.
\begin{flushleft}
$\mathbf{Case\ 2.}\ k=1.$
\end{flushleft}
From $\phi_{f}(e_2,e_1)=\lambda(\sum_{i\geq2}e^{1,i}_{k+i+1}+\sum_{i\geq3}e^{2,i}_{k+i+2})(e_2,e_1)$, we have
$[e_2,a_1e_{2}]=-\lambda e_{4}=0$.
So $\lambda=0$. We have $\phi_{f}(x,y)=[x,\phi(y)]=0$ for all $x,y\in\mathfrak{m}_2$. So $\phi(y)\in \mathrm{C}(\mathfrak{m}_2)=0$. Then $\phi=0$.
\begin{flushleft}
$\mathbf{Case\ 3.}\ k=0.$
\end{flushleft}
From $\phi_{f}(e_1,e_j)=\lambda(\sum_{i\geq2}e^{1,i}_{k+i+1}+\sum_{i\geq3}e^{2,i}_{k+i+2})(e_1,e_j)$ for $j\geq2$,
we have $[e_1,a_je_j]=\lambda e_{1+j}$. Thus $a_j=\lambda$, $j\geq2$.
Moreover, for $j\geq2$, from $\phi_{f}(e_j,e_1)=-\phi_{f}(e_1,e_j)=-\lambda e_{1+j}$, we have
$[e_j,a_1e_1]=-\lambda e_{1+j}$. Thus $a_1=\lambda$. So $\phi=\lambda\sum_{i\geq1}e^{i}_{i}$.
\end{proof}

Here we characterize local and 2-local derivations of $\mathfrak{m}_2$ by the result of derivations.

\begin{thm}
$\mathrm{LDer}(\mathfrak{m}_2)=\mathrm{BLDer}(\mathfrak{m}_2)=\mathrm{Der}(\mathfrak{m}_2).$
\end{thm}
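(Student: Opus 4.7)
The plan is to mimic the proofs already given for $\mathfrak{m}_0$ and $L_1$, combining the graded decomposition of Lemma \ref{ld} with the explicit description of $\mathrm{Der}_k(\mathfrak{m}_2)$ in Lemma \ref{M21}. Since $\mathrm{Der}(\mathfrak{g})\subseteq\mathrm{BLDer}(\mathfrak{g})\subseteq\mathrm{LDer}(\mathfrak{g})$ for every Lie algebra $\mathfrak{g}$, it suffices to establish the reverse inclusion $\mathrm{LDer}(\mathfrak{m}_2)\subseteq\mathrm{Der}(\mathfrak{m}_2)$, and Lemma \ref{ld} further reduces this to the homogeneous statement $\mathrm{LDer}_k(\mathfrak{m}_2)\subseteq\mathrm{Der}_k(\mathfrak{m}_2)$ for each $k\in\mathbb{Z}$. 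For $k\leq -1$, Lemma \ref{M21} gives $\mathrm{Der}_k(\mathfrak{m}_2)=0$, so every $\Delta_k\in\mathrm{LDer}_k(\mathfrak{m}_2)$ evaluates to zero on each $e_i$ and vanishes identically.

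The weights $k=0$ and $k=1$, where $\mathrm{Der}_k(\mathfrak{m}_2)$ is one-dimensional, are treated in the same way as the corresponding case for $\mathfrak{m}_0$. Writing $\Delta_0(e_i)=c_i\cdot ie_i$ and $\Delta_1(e_1)=0$, $\Delta_1(e_j)=c_j e_{j+1}$ for $j\geq 2$, the local-derivation condition applied to $e_1+e_i$ in weight $0$ and to $e_i+e_j$ with $i,j\geq 2$ in weight $1$ immediately forces the $c_i$ to be independent of $i$, so $\Delta_k$ lies in $\mathrm{Der}_k(\mathfrak{m}_2)$.

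The substantive work is in the two-dimensional cases $k=2$ and $k\geq 3$, which I would handle uniformly. By Lemma \ref{M21}, any $\Delta_k\in\mathrm{LDer}_k(\mathfrak{m}_2)$ with $k\geq 2$ has the form $\Delta_k(e_1)=\alpha_1 e_{k+1}$, $\Delta_k(e_2)=\alpha_2 e_{k+2}$, $\Delta_k(e_i)=\gamma_i e_{k+i}$ for $i\geq 3$, with $\alpha_1,\alpha_2$ and the family $\{\gamma_i\}$ a priori unrelated scalars. I would then test $\Delta_k$ on the triple sum $e_1+e_2+e_i$: the local-derivation property produces scalars $a,b$ such that $\Delta_k(e_1+e_2+e_i)$ coincides with the value of $aD_1+bD_2$ on this vector, where $D_1,D_2$ denote the basis derivations of parts (3) or (4) of Lemma \ref{M21}. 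Matching this against the linear expansion $\alpha_1 e_{k+1}+\alpha_2 e_{k+2}+\gamma_i e_{k+i}$ yields a small linear system in $a,b,\gamma_i$, which fixes $\gamma_i$ as a linear combination of $\alpha_1,\alpha_2$, concretely $\gamma_i=\alpha_2-\alpha_1$ both for $k=2$ and for $k\geq 3$, independent of $i$; hence $\Delta_k\in\mathrm{Der}_k(\mathfrak{m}_2)$.

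The only real obstacle is the bookkeeping in the $k\geq 2$ cases, where two independent basis derivations coexist and one must propagate the information about $\Delta_k(e_1)$ and $\Delta_k(e_2)$ to every $\Delta_k(e_i)$. The key idea, identical to the one used for $\mathfrak{m}_0$, is to choose a test vector meeting all three orbits $\{e_1\}$, $\{e_2\}$, $\{e_i\mid i\geq 3\}$ on which the basis derivations act inequivalently. Once that is fixed, each weight is disposed of by a short linear computation, and the chain $\mathrm{Der}(\mathfrak{m}_2)\subseteq\mathrm{BLDer}(\mathfrak{m}_2)\subseteq\mathrm{LDer}(\mathfrak{m}_2)\subseteq\mathrm{Der}(\mathfrak{m}_2)$ closes.
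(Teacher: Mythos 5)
Your proposal is correct and follows essentially the same route as the paper: reduce to homogeneous weights via Lemma \ref{ld}, then pin each $\Delta_k$ against the explicit basis of $\mathrm{Der}_k(\mathfrak{m}_2)$ from Lemma \ref{M21} by evaluating on suitable sums of basis vectors ($e_1+e_i$ for $k=0$, $e_2+e_i$ for $k=1$, $e_1+e_2+e_i$ for $k\geq 2$). The only, harmless, difference is that for $k\geq 3$ you use the single family of test vectors $e_1+e_2+e_i$, whereas the paper first uses $e_3+e_i$ to equalize the coefficients on the $e_i$, $i\geq 3$, and then $e_1+e_2+e_3$ to get the compatibility relation; both computations yield $\gamma_i=\alpha_2-\alpha_1$ and close the chain of inclusions.
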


\begin{proof}
By Lemmas \ref{ld} and \ref{M21}, it is sufficient to prove
$\mathrm{LDer}_{k}(\mathfrak{m}_2)\subseteq\mathrm{Der}(\mathfrak{m}_2)$ for $k\geq0$.
By Lemma \ref{M21}, we prove that in the following  cases.
\begin{flushleft}
$\mathbf{Case\ 1.}\ k=0.$
\end{flushleft}
Suppose that $\Delta\in\mathrm{LDer}_{0}(\mathfrak{m}_2)$ is a local derivation.  For
$\Delta(e_i)$, $i\geq1$, by definition, there exists $a_i\in\F$ such that
$$\Delta(e_i)=a_i\sum_{j\geq1}je^{j}_j(e_i)=ia_ie_i.$$
Then, for $i\geq 2$,
\begin{equation}\label{23}
\Delta(e_1+e_i)=\Delta(e_1)+\Delta(e_i)=a_1e_1+ia_ie_i.
\end{equation}
For $\Delta(e_1+e_i)$, $i\geq2$, by definition, there exists $a_{1,i}\in\F$ such that
\begin{equation}\label{24}
 \Delta(e_1+e_i)=a_{1,i}\sum_{j\geq1}je^{j}_j(e_1+e_i)=a_{1,i}(e_1+ie_i).
\end{equation}
Comparing Eqs. (\ref{23}) with (\ref{24}), we have $a_i=a_1$ for $i\geq 1$. So $\Delta=a_1\sum_{i\geq1}ie^{i}_i\in
\mathrm{Der}_0(\mathfrak{m}_2)$.
\begin{flushleft}
$\mathbf{Case\ 2.}\ k=1.$
\end{flushleft}
Suppose that $\Delta\in\mathrm{LDer}_{1}(\mathfrak{m}_2)$ is a local derivation.  For
$\Delta(e_i)$, $i\geq2$, by definition, there exists $a_i\in\F$ such that
$$\Delta(e_i)=a_i\sum_{j\geq2}e^{j+1}_{j}(e_i)=a_ie_{i+1}.$$
Then, for $i\geq 3$,
\begin{equation}\label{33}
\Delta(e_2+e_i)=\Delta(e_2)+\Delta(e_i)=a_2e_3+a_ie_{i+1}.
\end{equation}
For $\Delta(e_2+e_i)$, $i\geq3$, by definition, there exists $a_{2,i}\in\F$ such that
\begin{equation}\label{34}
 \Delta(e_2+e_i)=a_{2,i}\sum_{j\geq2}e^{j+1}_j(e_2+e_i)=a_{2,i}(e_3+e_{i+1}).
\end{equation}
Comparing Eqs. (\ref{33}) with (\ref{34}), we have $a_i=a_2$ for $i\geq 2$. So $\Delta=a_2\sum_{i\geq2}e^{i+1}_i\in
\mathrm{Der}_1(\mathfrak{m}_2)$.

\begin{flushleft}
$\mathbf{Case\ 3.}\ k=2.$
\end{flushleft}
Suppose that $\Delta\in\mathrm{LDer}_{2}(\mathfrak{m}_2)$ is a local derivation.  For
$\Delta(e_i)$, $i\geq1$, by definition, there exist $a_i,b_i\in\F$ such that
\begin{eqnarray*}
 \Delta(e_1) &=& b_1e_3, \\
  \Delta(e_2) &=&a_2e_4, \\
 \Delta(e_i) &=&(a_i-b_i)e_{i+2},\ i\geq 3.
\end{eqnarray*}
Then, for $i\geq 3$,
\begin{equation}\label{43}
\Delta(e_1+e_2+e_i)=\Delta(e_1)+\Delta(e_2)+\Delta(e_i)=b_1e_3+a_2e_4+(a_i-b_i)e_{i+2}.
\end{equation}
For $\Delta(e_1+e_2+e_i)$, $i\geq3$, by definition, there exist $a_{1,2,i},b_{1,2,i}\in\F$ such that
\begin{equation}\label{44}
 \Delta(e_1+e_2+e_i)=b_{1,2,i}e_3+a_{1,2,i}e_4+(a_{1,2,i}-b_{1,2,i})e_{i+2}.
\end{equation}
Comparing Eqs. (\ref{43}) with (\ref{44}), we have $a_i-b_i=a_2-b_1$ for $i\geq 3$. So $\Delta=a_2\sum_{i\geq2}e^{i+2}_i+b_1(e^{3}_{1}-\sum_{i\geq3}e^{i+2}_{i})\in
\mathrm{Der}_2(\mathfrak{m}_2)$.
\begin{flushleft}
$\mathbf{Case\ 4.}\ k\geq 3.$
\end{flushleft}
Suppose that $\Delta_{k}\in\mathrm{LDer}_{k}(\mathfrak{m}_2)$ is a local derivation.
For
$\Delta(e_i)$, $i\geq1$, by definition, there exist $a_{k;i},b_{k;i}\in\F$ such that
\begin{eqnarray*}
 \Delta(e_1) &=& (a_{k;1}-\frac{1}{2}b_{k;1})e_{1+k}, \\
  \Delta(e_2) &=&(a_{k;2}+\frac{1}{2}b_{k;2})e_{2+k}, \\
 \Delta(e_i) &=&b_{k;i}e_{i+k},\ i\geq 3.
\end{eqnarray*}
Then, for $i\geq 4$,
\begin{equation}\label{53}
\Delta(e_3+e_i)=\Delta(e_3)+\Delta(e_i)=b_{k;3}e_{3+k}+b_{k;i}e_{i+k}.
\end{equation}
For $\Delta(e_3+e_i)$, $i\geq4$, by definition, there exists $b_{k;3,i}\in\F$ such that
\begin{equation}\label{54}
 \Delta(e_3+e_i)=b_{k;3,i}(e_{3+k}+e_{i+k}).
\end{equation}
Comparing Eqs. (\ref{53}) with (\ref{54}), we have $b_{k;i}=b_{k;3}$ for $i\geq 3$.
Similarly, for
$\Delta(e_1+e_2+e_3)$ there exist $a_{k;1,2,3},b_{k;1,2,3}\in\F$ such that
\begin{eqnarray*}
  \Delta(e_1+e_2+e_3) &=&(a_{k;1,2,3}-\frac{1}{2}b_{k;1,2,3})e_{1+k}+(a_{k;1,2,3}+\frac{1}{2}b_{k;1,2,3})e_{2+k}+b_{k;1,2,3}e_{3+k}\\
  &=&    \Delta(e_1)+\Delta(e_2)+\Delta(e_3)\\
  &=& (a_{k;1}-\frac{1}{2}b_{k;1})e_{1+k}+(a_{k;2}+\frac{1}{2}b_{k;2})e_{2+k}+
   b_{k;3}e_{3+k}.
\end{eqnarray*}
So $(a_{k;2}+\frac{1}{2}b_{k;2})-(a_{k;1}-\frac{1}{2}b_{k;1})=b_{k;3}$.
Moreover, $\Delta_k=(a_{k;1}-\frac{1}{2}b_{k;1}+\frac{1}{2}b_{k;3})(e^{k+1}_1+e^{k+2}_2)+b_{k;3}(-\frac{1}{2}e^{k+1}_1+\frac{1}{2}e^{k+2}_2+
\sum_{i\geq3}e^{i+k}_i)\in\mathrm{Der}_{k}(\mathfrak{m}_2)$.
\end{proof}

\small\noindent \textbf{Acknowledgment}\\

The authors are supported by  NSF of Jilin Province (No. YDZJ202201ZYTS589), NNSF of China (Nos. 12271085, 12071405, 12001141) and the Fundamental Research Funds for the Central Universities.

\end{document}